\newtheorem{theorem}{Theorem}[section]
\newtheorem{lemma}[theorem]{Lemma}
\newtheorem{remark}{Remark}[section]
\newtheorem{example}{Example}
\newtheorem{definition}{Definition}[section]
\newtheorem{corollary}[theorem]{Corollary}
\newtheorem{claim}{Claim}
\numberwithin{equation}{section}
\renewcommand{\q}{\quad}
\newcommand{\f}{\frac}
\newcommand{\p}{\partial}
\newcommand{\ZR}{\mathbb{R}}
\newcommand{\Id}{{\bf 1}}
\newcommand{\set}[2]{\left\{ #1 \vphantom{#2} \right. : \left. \vphantom{#1} #2 \right\}}
\def\supp{{\text{\rm supp }}}
\newcommand*{\rom}[1]{\expandafter\@slowromancap\romannumeral #1@}
\newcommand{\bsb}{\boldsymbol}
\begin{document}

\title[Trilinear Estimates]{Maximal Decay Inequalities for Trilinear oscillatory integrals of convolution type}
\author{Philip T. Gressman}
\address{ Department of Mathematics, University of Pennsylvania, Philadelphia, PA 19104, USA}
\email{gressman@math.upenn.edu }

\author{Lechao Xiao}
\address{ Department of Mathematics, University of Pennsylvania, Philadelphia, PA 19104, USA}
\email{xle@math.upenn.edu}
\thanks{The first author is partially supported by NSF grant DMS-1361697 and an Alfred P. Sloan Research Fellowship.}
 \subjclass[2010]{Primary 42B20}
 \keywords{Oscillatory integrals, Sharp estimates, Newton polyhedra,}
\date{\today}


\begin{abstract}
In this paper we prove sharp $L^\infty$-$L^\infty$-$L^\infty$ decay for certain trilinear oscillatory integral forms of convolution type on $\ZR^2$. These estimates imply earlier $L^2$-$L^2$-$L^2$ results obtained by the second author as well as corresponding sharp, stable sublevel set estimates of the form studied by Christ \cite{CHR11-1} and Christ, Li, Tao, and Thiele \cite{CLTT05}. New connections to the multilinear results of Phong, Stein, and Sturm \cite{PSS01} are also considered.
\end{abstract}

\maketitle

%
%
%
%
%
%
%
%
%
%
%
%
%
%
%
%
%
%
%
%

\section{Introduction }

Beginning with the groundbreaking work of Christ, Li, Tao, and Thiele \cite{CLTT05}, there has been significant interest in the harmonic analysis literature to develop a robust and general theory of multilinear oscillatory integral operators. However, despite the fundamental insights provided by \cite{CLTT05}, progress on this program has been slow, due to the sheer complexity of the problem and the apparent inadequacy of existing tools in this more general context. Some of the more successful strategies to date focus on special cases of their general framework; see
\cites{CHR11-1, CHR11-2, CS11, GR08, GR11, X2013} for recent progress and \cites{CCW99, PSS01, CW02} for other related topics. 
One such special case, which we will further examine in this paper, is the case of trilinear forms which have convolution-type structure. Specifically we will consider forms $\Lambda(f,g,h)$ given by
\begin{align}\label{TR01}
\Lambda (f,g,h)= \iint e^{i\lambda S(x,y)}f(x)g(y)h(x+y)\phi(x,y) dxdy,
\end{align}
where $S(x,y)$ is a real analytic function defined in a neighborhood of the origin and $\phi$ is 
smooth cut-off function supported sufficiently near the origin. In the present case, we will take $f,g,h$ to belong to $L^p(\ZR)$, $L^q(\ZR)$, and $L^r(\ZR)$, respectively, and study the norm of the form as a multilinear functional on $L^p \times L^q \times L^r$ as $\lambda \rightarrow \pm \infty$. As observed in \cite{CLTT05}, One expects no decay at all when $S(x,y)$ may be written as a sum $S_1(x) + S_2(y) + S_3(x+y)$ for measurable functions $S_1, S_2$, and $S_3$. When $S$ is smooth, the (non-)degeneracy of $S$ is captured by the action of the differential operator $D$
\begin{align*}
D =\partial_x \partial_y (\partial_x -\partial_y)
\end{align*}
which annihilates sums of the form $S_1(x) + S_2(y) + S_3(x+y)$.

In his thesis, the second author considered a special case, namely $f$, $g$ and $h$ all belonging to $L^2$, and showed that
 sharp decay estimates can be characterized by the relative multiplicity of $S$, which is defined to be the multiplicity of the quotient of $S$ by the class of functions annihilated by the differential operator $D$. More precisely, if $n\in\mathbb N$ denotes the relative multiplicity of $S$, then 
 \begin{align}\label{L2}
\left | \Lambda(f,g,h) \right| \lesssim |\lambda|^{-\f 1 {2n}}\|f\|_2\|g\|_2\|h\|_2. 
\end{align}
 The basic observation upon which these earlier results is based is an estimate of the form
\begin{equation} | \Lambda(f,g,h)| \leq C |\lambda|^{-\frac{1}{6}} \|f\|_2 \|g\|_2 \|h\|_2 \label{onesix} \end{equation}
for phases $S$ with $|DS| > c > 0$ on the support of $\phi$, 
which itself follows from a clever application (see \cite{LI08}) of the H\"{o}rmander argument for nondegenerate bilinear oscillatory integral forms. 
Despite the fact that scaling arguments show that the exponent of $\lambda$ in \eqref{onesix} cannot be improved, a comparison to the sublevel set estimate (namely, the decay rate of the volume of the set
\[ \set{(x,y) \in [0,1]^2}{ |S(x,y)| < \epsilon} \]
as $\epsilon \rightarrow 0^+$) suggests that the decay rate $|\lambda|^{-1/6}$ is likely not the best possible if one considers $L^p$ spaces on the right-hand side other than $L^2$. Our first result is that such an improvement over $\lambda^{-1/6}$ can indeed be achieved:
\begin{theorem}\label{intro}
Let $S(x,y)$ be a real analytic function defined in a neighborhood of $0$. Assume that 
\begin{align*}
|\partial_x\partial_y(\partial_x -\partial_y)S(x,y)| \geq 1,
\end{align*}
for all $(x,y) \in {\rm Conv}(\supp(\phi))$. 
Then 
\begin{align*}
\left | \Lambda(f,g,h) \right| \lesssim |\lambda|^{-1/4}\|f\|_p\|g\|_q\|h\|_r,
\end{align*}
for all $(p,q, r)\in\mathcal A$.
\begin{align}
\mathcal A =\left\{(p,q,r): p,q, r\in [2,4), ~ \frac{1} p + \frac 1 q +\frac 1 r = \frac 5 {4} \right\}. 
\end{align}
\end{theorem}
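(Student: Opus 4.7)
The plan is to establish the estimate at the three vertices $(4,2,2), (2,4,2), (2,2,4)$ of $\mathcal A$ and then obtain the open interior by multilinear interpolation. The form $\Lambda$ enjoys a cyclic symmetry among the three linear forms $x$, $y$, $x+y$: a suitable linear change of variables (e.g.\ $(x, y) \mapsto (y, -(x+y))$, composed with reflection of one of the functions) permutes the roles of $f, g, h$ while preserving $|DS|$. Hence the three vertex estimates are mutually equivalent, and it suffices to prove
\[
|\Lambda(f,g,h)| \lesssim |\lambda|^{-1/4}\|f\|_4\|g\|_2\|h\|_2.
\]
I would derive this vertex bound by Riesz--Thorin interpolation along the segment $\{(1/p, 1/2, 1/2) : 2 \le p \le \infty\}$ between the known $L^2 L^2 L^2$ estimate (decay $|\lambda|^{-1/6}$) and an auxiliary endpoint
\[
|\Lambda(f,g,h)| \lesssim |\lambda|^{-1/3}\|f\|_\infty\|g\|_2\|h\|_2;
\]
at $p=4$ the interpolated rate is $(|\lambda|^{-1/6})^{1/2}(|\lambda|^{-1/3})^{1/2} = |\lambda|^{-1/4}$. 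The $|\lambda|^{-1/3}$ rate here is exactly half the scalar oscillatory decay $|\lambda|^{-2/3}$ that should hold for $\iint e^{i\lambda S}\phi\, dx\, dy$ under $|DS|\geq 1$ (by a standard Newton-polyhedron/stationary-phase argument for cubically nondegenerate phases), consistent with the loss from a single Cauchy--Schwarz step.

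The heart of the argument is the $L^\infty L^2 L^2$ endpoint. Applying Cauchy--Schwarz in $y$ (with $g \in L^2$) reduces it to
\[
\int |B(y)|^2\, dy \lesssim |\lambda|^{-2/3}\|h\|_2^2, \qquad B(y) = \int e^{i\lambda S(x,y)}\tilde f(x) h(x+y)\phi\, dx,
\]
for $\|\tilde f\|_\infty \le 1$. Expanding $|B|^2$ and performing the substitution $(u,v,z) = (x_1-x_2, x_2, x_2+y)$ produces the 3D oscillatory integral
\[
\iiint e^{i\lambda\tilde Q(u,v,z)}\, \tilde f(u+v)\overline{\tilde f(v)}\, h(u+z)\overline{h(z)}\, \psi(u,v,z)\, du\, dv\, dz
\]
with $\tilde Q(u,v,z) = S(u+v, z-v) - S(v, z-v)$. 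Two applications of the mean value theorem to $D = \partial_x\partial_y(\partial_x-\partial_y)$ yield the crucial identity $\partial_v\partial_z \tilde Q(u,v,z) = u \cdot DS(\xi, z-v)$, so $|\partial_v\partial_z \tilde Q| \ge |u|$ on the support of the cut-off; this is precisely where the hypothesis $|DS|\ge 1$ is invoked.

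The main obstacle is extracting the sharp $|\lambda|^{-2/3}$ decay from this 3D integral while keeping only the $L^2$ norm of $h$. A naive H\"ormander bilinear oscillatory integral bound in $(v,z)$ for each fixed $u$, followed by integration in $u \in [-C,C]$, only yields $|\lambda|^{-1/2}\|h\|_4^2$: the natural weight $h(u+z)\overline{h(z)}$ enters H\"ormander's $L^2$ estimate bounded by $\|h\|_4^2$ rather than $\|h\|_2^2$. To sharpen this to $|\lambda|^{-2/3}\|h\|_2^2$, the argument must further exploit the cubic nondegeneracy: I would dyadically decompose the $u$-variable, and on each shell $|u|\sim 2^{-j}$ invoke an additional $TT^*$/orthogonality step in $u$ that uses the oscillation of $\tilde Q$ in the translation parameter. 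This dyadic-plus-orthogonality refinement, which should recover the missing $|\lambda|^{-1/6}$ of decay and convert $\|h\|_4^2$ into $\|h\|_2^2$, is the principal technical hurdle. Once the endpoint is in hand, Riesz--Thorin yields the vertex bound at $(4,2,2)$, cyclic symmetry extends it to $(2,4,2)$ and $(2,2,4)$, and multilinear interpolation fills the open triangle $\mathcal A$.
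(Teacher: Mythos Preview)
Your strategy hinges on the auxiliary endpoint
\[
|\Lambda(f,g,h)| \lesssim |\lambda|^{-1/3}\|f\|_\infty\|g\|_2\|h\|_2,
\]
and this is where the proposal breaks down. You correctly identify that the naive H\"ormander bound in $(v,z)$, integrated over $u$, only yields $|\lambda|^{-1/2}\|h\|_4^2$, and you propose to recover the missing $|\lambda|^{-1/6}$ and downgrade $\|h\|_4$ to $\|h\|_2$ via an unspecified ``additional $TT^*$/orthogonality step in $u$.'' No such argument is supplied, and there is strong evidence none is available by elementary means: by the very cyclic symmetry you invoke, the three endpoints $(\infty,2,2)$, $(2,\infty,2)$, $(2,2,\infty)$ at rate $|\lambda|^{-1/3}$ would interpolate (centroid $(1/3,1/3,1/3)$) to
\[
|\Lambda(f,g,h)|\lesssim |\lambda|^{-1/3}\|f\|_3\|g\|_3\|h\|_3,
\]
which is precisely the conjectural sharp nondegenerate estimate the paper leaves open and calls ``likely extremely difficult.'' Your intermediate step is therefore strictly stronger than the theorem you are trying to prove.

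The paper's route is different and avoids any $|\lambda|^{-1/3}$ input. One pulls out $h$ by duality, applies $TT^*$ to $\|B(f,g)\|_2^2$, and observes (via essentially the same mean-value computation you perform) that the resulting phase $S_\tau(x,y)=S(x,y)-S(x-\tau,y+\tau)$ satisfies $|\partial_x\partial_y S_\tau|\ge|\tau|$. H\"ormander's bilinear estimate then gives
\[
\|B(f,g)\|_2^2 \lesssim \int |\lambda\tau|^{-1/2}\,\|f\,\overline{f(\cdot-\tau)}\|_2\,\|g\,\overline{g(\cdot+\tau)}\|_2\,d\tau.
\]
The decisive step, which replaces your missing endpoint, is to split $|\tau|^{-1/2}=|\tau|^{-\sigma/2}|\tau|^{-\rho/2}$ with $\sigma+\rho=1$, apply Cauchy--Schwarz in $\tau$, recognize $\int|\tau|^{-\sigma}\|f\,\overline{f(\cdot-\tau)}\|_2^2\,d\tau=\int|f|^2\, I_\sigma(|f|^2)$, and invoke the Hardy--Littlewood--Sobolev inequality. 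This yields $|\Lambda|\lesssim|\lambda|^{-1/4}\|f\|_p\|g\|_q\|h\|_2$ directly for all $2<p,q<4$ with $\tfrac1p+\tfrac1q=\tfrac34$; the full region $\mathcal A$ then follows from the symmetry and interpolation you already outlined.
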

We note that this improvement from $|\lambda|^{-1/6}$ to $|\lambda|^{-1/4}$ still falls short of the $|\lambda|^{-1/3}$ decay rate suggested by the optimal sublevel set decay. 
 Our investigation of this final gap suggests that improvements beyond $|\lambda|^{-1/4}$, if possible, are likely extremely difficult to accomplish.

When $S$ is sufficiently degenerate, however, closing the gap between the sublevel set decay rate and the oscillatory integral decay rate is, in fact, possible. While this matching of decay rates is satisfying and natural, it should perhaps be regarded as somewhat surprising that this is possible since, among other things, the highest-possible decay rate can 
 only be achieved when $f,g$, and $h$ all belong to $L^\infty$, which is not traditionally a regime in which one expects to find strong cancellation effects.

To state our main results for general analytic phases, we introduce the following notions. 
Let $P =DS$ and write 
\begin{align}\label{DS01}
P(x,y) =\sum_{j=n-3}^\infty P_j(x,y),
\end{align}
where each $P_j$ is a homogeneous polynomial of degree $j$. 
Here we implicitly assumed the relative multiplicity of $S$ is equal to $n$. 
Let $\alpha, \beta, \gamma$ be the orders of $x$, $y$, $(x+y)$ in $P_{n-3}$, $d_0$ be the maximal order of other linear terms in $P_{n-3}$. 
We must also define $d_1$ to be the maximal vanishing order of any edge whose slope is not equal to $-1$ of the Newton polygon of $P$ in any of the coordinates $(x,y)$, 
or $(x, x+y)$ or $(y, x+y)$ 
(if the reader is unfamiliar with the Newton polygon, the notion of order of vanishing of an edge is made precise by Definition \ref{VO1}).
It is worth pointing out that $d_1 \leq \max\{\alpha,\beta, \gamma\}$.  
With these definitions in place, we define
\begin{align}\label{AS}
\kappa  = \max\{\alpha, \beta, \gamma, d_0+1, d_1+1\},
\end{align} 
and come to our main result on the decay of $\Lambda(f,g,h)$ on $L^\infty \times L^\infty \times L^\infty$:
\begin{theorem}\label{main3}
Let $S(x,y)$ be as above. Then 
\begin{align}\label{MI01}
|\Lambda(f,g,h)|\leq C|\lambda|^{-\frac {1}{\max\{4, \kappa +2, \frac n 2\}}} \log(2+|\lambda|)^\mu\|f\|_\infty\|g\|_\infty\|h\|_\infty,
\end{align}
where the exponent $\mu$ satisfies
\begin{enumerate}
\item (Sharp Estimates) when $n\geq 9$, 
\begin{enumerate}
\item $\mu = 0$ if $\kappa < \frac n 2-2$ or  if $d_1+1=\kappa =\f n 2 -2$ and $\alpha, \beta, \gamma, d_0+1< \kappa$,
\item $\mu = 1$ if $\f n 2 -2 =\kappa$ and at least one of $\alpha, \beta, \gamma, d_0+1$ is equal to $\kappa$. 
\end{enumerate}
\item (Non-Sharp Estimates) 
\begin{enumerate}
\item $\mu =0$ if $n\geq 9$ and $\kappa >\f n 2 -2$ or if $n=3$,
\item $\mu = 1$ if $4 \leq n \leq 7$,
\item  $\mu=2$ if $n =8$. 
\end{enumerate}
\end{enumerate}
\end{theorem}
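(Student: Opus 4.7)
The plan is to combine Theorem \ref{intro} with a Newton polyhedron resolution of the phase and to compete an oscillatory bound against the trivial bound on each dyadic piece. After subtracting off terms annihilated by $D$ (which can be absorbed into $f,g,h$ without changing their $L^\infty$ norms), one may assume that the Taylor expansion of $S$ at the origin starts at degree $n$, so that on a generic part of the isotropic dyadic annulus $A_\delta=\{(x,y):|x|\sim|y|\sim\delta\}$ one has $|DS|\sim\delta^{n-3}$ and $|S|\sim\delta^n$.

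On the ``generic'' pieces of $A_\delta$ (those avoiding the lines $x=0$, $y=0$, $x+y=0$ and the other distinguished directions recorded in $d_0,d_1$), I rescale isotropically by $(x,y)\mapsto(\delta u,\delta v)$. Because $D[S(\delta u,\delta v)]=\delta^3(DS)(\delta u,\delta v)$, the normalized phase $\tilde S(u,v)=\delta^{-n}S(\delta u,\delta v)$ satisfies $|D\tilde S|\gtrsim 1$, and the effective frequency becomes $\lambda\delta^n$. Applying Theorem \ref{intro} at the symmetric exponent $p=q=r=12/5$, converting the resulting $L^{12/5}$ norms back to $L^\infty$ via $\|f\chi_{|x|\sim\delta}\|_{12/5}\lesssim\delta^{5/12}\|f\|_\infty$, and recording the Jacobian $\delta^2$, I obtain
\begin{equation*}
|\Lambda_{\delta}(f,g,h)|\lesssim \delta^{2-n/4}|\lambda|^{-1/4}\|f\|_\infty\|g\|_\infty\|h\|_\infty,
\end{equation*}
which beats the trivial bound $\delta^2\|f\|_\infty\|g\|_\infty\|h\|_\infty$ exactly when $\delta\gtrsim|\lambda|^{-1/n}$. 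Summing the two bounds over dyadic $\delta$ recovers the rate $|\lambda|^{-1/\max(4,n/2)}$ on the generic regions, with a borderline geometric series (and hence a logarithmic factor) precisely at $n=8$.

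The more delicate step is the ``edge'' pieces where two of the scales $|x|,|y|,|x+y|$ are comparable to some $\delta$ but the third is smaller, say $|y|\sim\sigma\ll\delta$; the analogous pieces near the non-coordinate linear factors of $P_{n-3}$ (captured by $d_0$); and the pieces along the non-slope-$-1$ edges of the Newton polygon of $P$ (captured by $d_1$). Here the difficulty is that $D$ is not invariant under anisotropic rescaling, so one cannot directly reduce to the hypothesis of Theorem \ref{intro}. My plan is to first rescale isotropically by the larger scale $\delta$, reducing to an oscillatory integral on a thin rectangle of width $\sigma/\delta$ whose phase's derived polynomial degenerates along one axis to an order dictated by $\alpha,\beta,\gamma,d_0,$ or $d_1$; I then apply Theorem \ref{intro} in combination with a trivial bound in the transverse variable. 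Balancing the oscillatory and trivial estimates in the two scales $(\sigma,\delta)$ produces a contribution whose critical scale leads to the exponent $\kappa+2$ in the denominator of the decay rate.

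The main obstacle is careful bookkeeping in the edge analysis: one must simultaneously track two scales, sum two geometric series, and identify which summations are borderline in each regime of $(n,\kappa)$. The distinction between the sharp cases ($\kappa\le \tfrac{n}{2}-2$ with $n\ge 9$) and the non-sharp cases amounts to determining which of the dyadic series $\sum_\delta \delta^{2-n/4}$ or its edge analog $\sum_{\sigma,\delta}\sigma^a\delta^b$ is geometrically convergent, logarithmically divergent, or algebraically divergent from its endpoints; the presence of two simultaneously borderline sums at $n=8$ is what produces $\mu=2$ in case (2c), while a single borderline sum in cases (1b), (2b) produces $\mu=1$. Matching the dominant contribution across all pieces to the stated value of the log exponent completes the proof.
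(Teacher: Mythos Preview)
Your isotropic-annulus argument on the ``generic'' region is correct and is essentially the paper's Case~1 ($k=0$): after rescaling one lands exactly on the $|\lambda|^{-1/4}$ bound of Theorem~\ref{intro}, and balancing against the trivial estimate gives the rate $|\lambda|^{-1/\max(4,n/2)}$ with the borderline sum at $n=8$.

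The real gap is the edge analysis. Your plan there is to rescale isotropically by the larger scale $\delta$ and then invoke Theorem~\ref{intro} on the resulting thin strip, but this cannot work as stated: on the strip $|v|\sim\sigma/\delta$ the rescaled $D\tilde S$ is of size $(\sigma/\delta)^{\beta}$ (or the analogous power for the other directions), not $\gtrsim 1$, so the hypothesis of Theorem~\ref{intro} fails precisely where you need it. What the paper actually uses on every small box is not Theorem~\ref{intro} but its \emph{local} analogue, Theorem~\ref{local}/Corollary~\ref{LC90}, which gives decay $|\lambda\mu|^{-1/4}$ with $\mu=\inf|DS|$ on the box. That local theorem, however, also requires the derivative bounds $|\partial_x^l DS|\lesssim \mu\,\delta_1^{-l}$ and $|\partial_y^l DS|\lesssim \mu\,\delta_2^{-l}$; producing boxes on which $DS$ is simultaneously bounded below and has controlled derivatives is exactly what the resolution-of-singularities machinery of Section~\ref{PRE1} accomplishes (making $DS$ behave like a monomial $x_k^{p_k}y_k^{q_k}$). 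Your sketch supplies neither the local oscillatory lemma nor the mechanism for the derivative control.

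Two further points your outline does not account for. First, the direction $x+y=0$ is not handled like $x=0$ or $y=0$: the paper must switch from axis-parallel boxes to parallelograms aligned with $x+y=0$ and apply the $(\frac83,\frac83,2)$ case of Theorem~\ref{local} (Cases~3 and~4 of Section~\ref{PF03}); an isotropic rescaling followed by Theorem~\ref{intro} will not see this. Second, the invariant $d_1$ is not the order of a linear factor of $P_{n-3}$; it is the vanishing order along \emph{non-slope-$(-1)$} edges of $\mathcal N(DS)$ (in three coordinate systems), and it enters precisely as the bound $s_0\le d_1$ or $s_1\le d_1$ at the second and later stages of the resolution (see \eqref{AB02}, \eqref{AB04}). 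A purely first-order ``two-scale'' decomposition in $(\sigma,\delta)$ does not generate this quantity, so your bookkeeping cannot produce the correct exponent $\kappa+2$ or the stated distinction between cases~(1a) and~(1b).
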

For $n\geq 9$ and 
for a generic function $S$ (i.e. $\kappa <\frac n 2 -2$),
the estimates in Theorem \ref{main3} strictly improve upon (\ref{L2}) in the sense that (\ref{L2}) can be obtained
by interpolating (\ref{MI01}) with the trivial non-decay $L^{\frac 3 2}\times L^{\frac 3 2}\times L^{\frac 3 2}$ estimate.
Moreover, the decay in (\ref{MI01}) also exceeds the decay rates found by Phong, Stein, and Sturm \cite{PSS01} for the same phase function and a corresponding {\it bilinear} form. 
For a generic homogeneous polynomial in $\mathbb R^2$ of degree $n \geq 9$, 
the highest decay achieved for any estimate in \cite{PSS01} is $|\lambda|^{-1/n}$, whereas
 Theorem \ref{main3} yields a decay of $|\lambda|^{-2/n}$.

Although \eqref{MI01} does not have a sharp rate of decay in general when $\kappa >\f n 2 -2$,
it is possible to obtain better and even sharp estimates for certain phases in this category. We will not pursue this issue in the present paper. 
On the other hand, improvement of the decay when $3\leq n \leq 8$ and $\kappa \leq \f n 2 -2$ seems to be an extremely challenging problem, for it 
would require improvement of the decay in Theorem \ref{intro}. In general, a natural goal for this problem is to fully understand the maximal convex hull
of the tuples $(\delta, \f 1 p, \f 1 q, \f 1 r)$ such that 
\begin{align}
|\Lambda (f,g,h)|\lesssim |\lambda|^{-\delta}\|f\|_p\|g\|_q\|h\|_r;
\end{align}
Our preliminary investigation of this question suggests the answer will be extremely complicated, in contrast, for example to the simply-stated results of Phong, Stein, and Sturm \cite{PSS01}. 
Even assuming the best possible estimate for the non-degenerate case 
\begin{align}
|\Lambda (f,g,h)|\lesssim |\lambda|^{-\f 1 3}\|f\|_3\|g\|_3\|h\|_3,
\end{align}
it is still unclear to us how many extreme points the convex polytope will have for general analytic phases, not to mention the question of where those extreme points should be located.

The proof of Theorem \ref{main3} is built up upon two main ingredients. 
The first is a localized version of Theorem \ref{intro} that will be developed 
in Section \ref{SE02}.  The proofs of the local and global versions of Theorem \ref{intro} are essentially the same. 
The second main ingredient is a resolution of singularities algorithm developed in \cite{X2013}. 
We only sketch its proof in Section \ref{PRE1} but refer the reader to \cite{X2013} for rigorous details. 
The proof of the main theorem will appear in the last section. The basic framework is as follows: 
 we first employ the resolution algorithm to decompose
the support of $\phi$ into finitely many subregions on which $DS$ behaves like a monomial. 
One can then attack the corresponding localizations (\ref{TR01})
by invoking Theorem \ref{local}. 

\subsection*{Notation:} 
We use $X\lesssim Y$ to mean ``there exists a constant $C>0$ such that $X\leq CY $,'' where in this context (and throughout the entire paper) constants $C$ may depend on the phase $S$ and the cutoff function $\phi$ but must be independent of the functions $f$, $g$, and $h$ and the real parameter $\lambda$. The expression $X\gtrsim Y$ is analogous,
and $X\sim Y$ means both $X\lesssim Y$ and $X\gtrsim Y$.

%
%
%
%
%
%
%
%
%
%
%
%
%
%
%
%
%
%
%
%

\section{The case of non-degeneracy and its local analogue}
\label{SE02}

The goal of this section is to prove Theorem \ref{intro} (referred as the global version) and develop its local analogue.  
The proof will be accomplished in two steps. In the first step, the method of $TT^*$ is employed to 
reduce the trilinear form to a bilinear variant to which one may apply either a 
classical result from H\"ormander (in the proof of the global version) or the Phong-Stein operator van
der Corput Lemma (in the local version). This basic strategy was introduced by Li \cite{LI08} in his proof of the $|\lambda|^{-\frac 1 6}$ decay rate for $L^2 \times L^2 \times L^2$.
To improve the decay to $|\lambda|^{-\frac 1 4}$, the second step of our proof invokes the 
Hardy-Littlewood-Sobolev inequality to take advantage of the convolutional structure of our form and improve the overall decay in $\lambda$.    
We begin with the following definition.
\begin{definition}
Let $\vec v$ be a vector in $\mathbb R^2$.
A set $X\subset \mathbb R^2$ is $\vec v$-convex when for any points $a,b \in X$ such that $a - b = c \vec v$ for some real $c$, the line segment joining $a$ and $b$ is also contained in $X$.
We use ${\rm Conv}_{\vec v}(X)$ to denote the smallest $\vec v$-convex set containing $X$
and ${\rm Conv}_{V}(X)$ the smallest set containing $X$ who is $\vec v$-convex for all $\vec v\in V$.  
\end{definition}
\begin{theorem}\label{local}
Let  $\phi(x,y)$ be a smooth function supported in a strip of $x$-width and $y$-width
no more than $\delta_1$ and $\delta_2$ respectively. Assume also that
\begin{align}\label{ps1}
|\partial_y \phi(x,y)| \lesssim \delta_2^{-1} \q\q\mbox{and} \q\q|\partial^2_y \phi(x,y)| \lesssim \delta_2^{-2}. 
\end{align}
Set $\vec u = (1,0)$, $\vec v =(0,1)$ and $\vec w = (-1,1)$, and suppose for some $\mu>0$, the amplitude $S(x,y)$ is a smooth function such that for all $(x,y)\in {\rm Conv}_{\{\vec u, \vec v, \vec w\}}(\supp(\phi))$:
\begin{align}\label{LE01}
|DS(x,y)| \gtrsim \mu
\q\q\mbox{and}\q\q
 |\partial_{y}^{l}DS(x,y)| \lesssim \frac{\mu}{\delta_2^{l}} \q\mbox{for}\q l=1,2.
\end{align}
Then for $\Lambda$ defined as above, one has
\begin{align}\label{LO01}
\left | \Lambda(f,g,h) \right| \lesssim |\lambda\mu|^{-1/4}\|f\|_p\|g\|_q\|h\|_2,
\end{align}
and 
\begin{align}\label{LO02}
\left | \Lambda(f,g,h) \right| \lesssim |\lambda\mu|^{-1/4}\|f\|_2\|g\|_p\|h\|_q,
\end{align}
for $2<p, q<4$ and $\f 1 p  +\f 1 q = \f 3 4$. 
\end{theorem}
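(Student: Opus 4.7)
The plan is to adapt the $TT^*$ framework of Li \cite{LI08} (originally used for the $L^2$-$L^2$-$L^2$ case) by inserting a Hardy-Littlewood-Sobolev step, which upgrades the resulting decay rate from $|\lambda\mu|^{-1/6}$ to $|\lambda\mu|^{-1/4}$ at the cost of placing $f,g$ in mixed Lebesgue spaces with $1/p+1/q = 3/4$.

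To prove \eqref{LO01}, I would first rewrite $\Lambda(f,g,h) = \int h(z) T(z)\, dz$ with $T(z) = \int e^{i\lambda S(x,z-x)} f(x) g(z-x) \phi(x, z-x)\, dx$, apply Cauchy-Schwarz to place $h$ in $L^2$, expand $\int |T(z)|^2\, dz$, and substitute $u = x_1 - x_2$. This yields
\begin{align*}
|\Lambda(f,g,h)|^2 \leq \|h\|_2^2 \int |I(u)|\, du,
\end{align*}
where $I(u) = \iint e^{i\lambda\Psi(x,y;u)} F_u(x) G_u(y) \tilde\phi(x,y;u)\, dx dy$, with phase $\Psi(x,y;u) = S(x+u, y-u) - S(x,y)$, shifted cutoff $\tilde\phi(x,y;u) = \phi(x+u, y-u)\overline{\phi(x,y)}$, and tensor factors $F_u(x) = f(x+u)\overline{f(x)}$, $G_u(y) = g(y-u)\overline{g(y)}$. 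Using the identity $(\partial_x - \partial_y)\partial_x\partial_y S = DS$ together with the fundamental theorem of calculus, a direct computation gives $\partial_x\partial_y\Psi = u \int_0^1 DS(x+tu, y-tu)\, dt$; the $\{\vec u, \vec v, \vec w\}$-convex hull hypothesis on $\supp(\phi)$ is precisely what forces the segment $\{(x+tu, y-tu):t\in[0,1]\}$ to stay inside the region where $|DS|\gtrsim\mu$ and (by continuity on the connected convex hull) $DS$ has constant sign, so that $|\partial_x\partial_y\Psi|\gtrsim|u|\mu$. Differentiating the same identity and invoking \eqref{ps1}, \eqref{LE01} confirms the remaining regularity hypotheses of the Phong-Stein operator van der Corput lemma, which then yields
\begin{align*}
|I(u)| \lesssim |\lambda\mu u|^{-1/2} \|F_u\|_2 \|G_u\|_2.
\end{align*}

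To conclude, split $|u|^{-1/2} = |u|^{-a} |u|^{-b}$ with $a+b = 1/2$ and apply Cauchy-Schwarz in $u$, reducing matters to bounding
\begin{align*}
\int |u|^{-2a}\|F_u\|_2^2\, du = \iint |f(x)|^2 |f(y)|^2 |x-y|^{-2a}\, dxdy
\end{align*}
and its $G_u$-analogue. For $0 < 2a < 1$, the one-dimensional Hardy-Littlewood-Sobolev inequality bounds the right-hand side by a constant times $\|f\|_{2/(1-a)}^4$; the substitution $p = 2/(1-a)$, $q = 2/(1-b)$ translates $a+b=1/2$ and $a,b\in(0,1/2)$ into $1/p+1/q = 3/4$ and $p,q\in(2,4)$, producing $\int |I(u)|\, du \lesssim |\lambda\mu|^{-1/2}\|f\|_p^2\|g\|_q^2$ and hence \eqref{LO01}. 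Estimate \eqref{LO02} follows identically by applying Cauchy-Schwarz to $f$ rather than $h$ at the first step. The main technical obstacle will be the careful local application of the Phong-Stein lemma, tracking the precise dependence of the implicit constants on $\delta_1, \delta_2, \mu$ and verifying that all hypotheses on higher $\partial_y$-derivatives of $\Psi$ and $\tilde\phi$ remain stable under the $u$-shift; the $\{\vec u,\vec v,\vec w\}$-convexity assumption on $\supp(\phi)$ is calibrated precisely so that both $(x,y)$ and $(x+u, y-u)$ lie in the controlled region simultaneously.
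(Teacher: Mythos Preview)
Your proposal is correct and follows essentially the same route as the paper: duality/Cauchy--Schwarz to remove $h$, a $TT^*$ expansion producing a shifted phase whose mixed Hessian is expressed via the fundamental theorem of calculus as an average of $DS$ along a $\vec w$-segment, the Phong--Stein operator van der Corput lemma for the resulting bilinear integral, and finally a Cauchy--Schwarz splitting of the $|u|^{-1/2}$ weight combined with Hardy--Littlewood--Sobolev to land in the stated $(p,q)$ range. The only cosmetic differences are notational (your $u,a,b$ versus the paper's $\tau,\sigma,\rho$, and your bilinear HLS versus the paper's Riesz-potential formulation); in particular your observation that $DS$ has constant sign on the connected convex hull is exactly what underlies the paper's appeal to the Mean Value Theorem, and the $\{\vec v,\vec w\}$-convexity is used, as you anticipate, to propagate the Phong--Stein hypotheses to the shifted phase and cutoff.
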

\begin{remark}
If the estimates (\ref{ps1}) and (\ref{LE01}) are replaced by
\begin{align}\label{ps2}
|\partial_x \phi(x,y)| \lesssim \delta_1^{-1}, \q\q|\partial^2_x \phi(x,y)| \lesssim \delta_1^{-2}. 
\end{align}
and 
\begin{align}\label{LE02}
|DS(x,y)| \gtrsim \mu
,\q\q
 |\partial_{x}^{l}DS(x,y)| \lesssim \frac{\mu}{\delta_1^{l}} \q\mbox{for}\q l=1,2,
\end{align}
then the corresponding estimates
\begin{align}\label{LO03}
\left | \Lambda(f,g,h) \right| \lesssim |\lambda|^{-1/4}\|f\|_p\|g\|_q\|h\|_2,
\end{align}
and 
\begin{align}\label{LO04}
\left | \Lambda(f,g,h) \right| \lesssim |\lambda|^{-1/4}\|f\|_p\|g\|_2\|h\|_q
\end{align}
also hold. One can then employ interpolation to obtain the following corollary.
\end{remark}

\begin{corollary}\label{LC90}
Let $\phi(x,y)$ be a smooth function supported in a strip of $x$-width and $y$-width
no more than $\delta_1$ and $\delta_2$ respectively. Suppose that for $l = 1 $ and $2$, 
\begin{align}\label{ps10}
|\partial_y \phi(x,y)| \lesssim \delta_2^{-l} \q\q\mbox{and} \q\q|\partial^l_x \phi(x,y)| \lesssim \delta_1^{-l}. 
\end{align}
Let $\mu>0$ and $S(x,y)$ be a smooth function s.t. for all $(x,y)\in {\rm Conv}_{\{\vec u, \vec v, \vec w\}}(\supp(\phi))$ and for $l=1$ and $2$, 
\begin{align}\label{LE010}
|DS(x,y)| \gtrsim \mu, 
\q
 |\partial_{x}^{l}DS(x,y)| \lesssim \frac{\mu}{\delta_1^{l}},  \q
\mbox{and}\q
 |\partial_{y}^{l}DS(x,y)| \lesssim \frac{\mu}{\delta_2^{l}}.
\end{align}
Then for $\Lambda$ defined as above and for $(p,q,r) \in \mathcal A$, one has
\begin{align}\label{LO010}
\left | \Lambda(f,g,h) \right| \lesssim |\lambda\mu|^{-1/4}\|f\|_p\|g\|_q\|h\|_r. 
\end{align}
\end{corollary}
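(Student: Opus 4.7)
The plan is to derive the full family of estimates (\ref{LO010}) by trilinear Riesz--Thorin interpolation among three ``base families,'' each placing one of $f$, $g$, $h$ in $L^2$.

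First, I would observe that the joint hypothesis (\ref{ps10})--(\ref{LE010}) of the corollary implies both pairs (\ref{ps1})--(\ref{LE01}) and (\ref{ps2})--(\ref{LE02}) simultaneously. Theorem~\ref{local} then supplies estimates (\ref{LO01}) and (\ref{LO02}) (placing $L^2$ on $h$ and on $f$ respectively), while the Remark supplies (\ref{LO03}) and (\ref{LO04}) (placing $L^2$ on $h$ and on $g$). Collecting the three distinct slot placements, I obtain: for every $2<p,q<4$ with $\tfrac1p+\tfrac1q=\tfrac34$,
\begin{align*}
|\Lambda(f,g,h)| &\lesssim |\lambda\mu|^{-1/4}\|f\|_2\|g\|_p\|h\|_q,\\
|\Lambda(f,g,h)| &\lesssim |\lambda\mu|^{-1/4}\|f\|_p\|g\|_2\|h\|_q,\\
|\Lambda(f,g,h)| &\lesssim |\lambda\mu|^{-1/4}\|f\|_p\|g\|_q\|h\|_2.
\end{align*}

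Next I would verify the combinatorial fact that $\mathcal{A}$ equals the convex hull (in exponent space) of these three base families. Writing $a=\tfrac1p$, $b=\tfrac1q$, $c=\tfrac1r$, the admissible set $\mathcal{A}$ becomes the closed triangle in the plane $a+b+c=\tfrac54$ cut out by $\tfrac14<a,b,c\le\tfrac12$; the three corner points $(1/2,1/2,1/4)$ and permutations are excluded since they correspond to $r=4\notin[2,4)$. The three base families parameterize precisely the relative interiors of the three edges of this triangle, so every admissible triple either lies on one of the base edges or on a segment joining two points from different base families.

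Finally, applying complex interpolation --- either Stein's theorem for multilinear forms or equivalently the trilinear Riesz--Thorin theorem --- to any such connecting segment produces the claimed $L^p\times L^q\times L^r$ bound with the same $|\lambda\mu|^{-1/4}$ constant, since that constant is uniform across each base family. No serious obstacle arises: $\Lambda$ is a fixed trilinear form with no dependence on an interpolation parameter, so the interpolation step is entirely routine. The only point worth remarking on is the treatment of boundary indices $p=2$, $q=2$, or $r=2$, but such boundary points of $\mathcal{A}$ are already covered directly by one of the base families rather than by interpolation.
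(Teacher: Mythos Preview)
Your proposal is correct and follows exactly the approach the paper indicates: the paper's entire proof of Corollary~\ref{LC90} is the one-line remark ``One can then employ interpolation to obtain the following corollary,'' and you have correctly filled in the details of that interpolation among the estimates (\ref{LO01}), (\ref{LO02}), and (\ref{LO04}).
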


\begin{proof}[Proofs of Theorem \ref{intro} and Theorem \ref{local}]
First, we use duality to eliminate dependence on the function $h$. Under the change of variables $u=x+y$ and $v=y$, we must have
\begin{align}\label{pullout}
|\Lambda(f,g,h)| \leq \|B(f,g)\|_2\|h\|_2,
\end{align}
where 
\begin{align*}
B(f,g)(u) = \int e^{i\lambda S(u-v,v)} f(u-v) g(v) \phi (u-v, v) dv.
\end{align*}
Employing the method of $TT^*$ yields that $\|B(f,g)\|_2^2$ equals
\begin{align*}
  \iiint e^{i(\lambda S(u-v_1,v_1)-\lambda S(u-v_2,v_2))}  f(u-v_1) &  \bar f(u-v_2)  g(v_1) \bar g(v_2)
\\
&  \times \phi(u-v_1, v_1) \phi (u-v_2, v_2) dv_1dv_2du.
\end{align*}
Change variables again: let $x=u-v_1$, $y=v_1$ and $\tau = v_2-v_1$ and set
\begin{align}
&S_\tau(x,y)  = S(x,y) -S(x-\tau,y+\tau), \nonumber
\\
&F_\tau(x) = f(x)\bar f(x-\tau), \nonumber
\\
&G_\tau(y) =g(y)\bar  g(y+\tau), \nonumber
\\
&\phi_\tau(x,y) = \phi(x,y)\phi(x-\tau,y+\tau).  \label{support}
\end{align}
With this notation, we have the identity
\begin{align}\label{bound}
\|B(f,g)\|_2 ^2 = \int\left(\iint e^{i\lambda S_\tau(x,y)} F_\tau(x) G_\tau(y) \phi_\tau(x,y) dxdy \right) d\tau.
\end{align}
We claim that under the assumptions of Theorem \ref{intro} 
\begin{align}\label{HO1002}
&\|B(f,g)\|_2 ^2
\leq C\int |\lambda \tau|^{-1/2} \|F_\tau\|_2\|G_\tau\|_2 d\tau
\end{align}
by the H\"{o}rmander theory of bilinear oscillatory integrals and that under the assumptions of Theorem \ref{local},
\begin{align}\label{HO1003}
&\|B(f,g)\|_2 ^2
\leq C\int |\lambda\mu \tau|^{-1/2} \|F_\tau\|_2\|G_\tau\|_2 d\tau 
\end{align}
by virtue of the operator van der Corput estimate of Phong and Stein. We will postpone the derivation of these inequalities for the moment and first show how to deduce the results of Theorem \ref{intro} and Theorem \ref{local}. In doing so, we 
focus only on the proof of Thoeorem \ref{intro} since the deduction of Theorem \ref{local} at this point differs only by a constant factor of $\mu^{-1/2}$.

By applying the Cauchy-Schwarz inequality, $\|B(f,g)\| ^2$ is then controlled by 
\begin{align}
 C|\lambda|^{-1/2} \left(\int |\tau|^{-\sigma}\|F_\tau\|^2_2d\tau\right)^{1/2}\left (\int |\tau|^{-\rho}\|G_\tau\|^2_2 d\tau \right)^{1/2},
\end{align}
where $\sigma, \rho > 0$ with $\sigma +\rho=1$. 
Let 
$$
I_\sigma(f)(x)  =\int |\tau|^{-\sigma} f(x-\tau)d\tau. 
$$
Then 
\begin{align*}
\int |\tau|^{-\sigma}\|F_\tau\|^2_2d\tau &= \int |f|^2(x) \Big( \int   |\tau|^{-\sigma} \cdot  |f|^2(x-\tau)d\tau\Big)dx
\\
&= \int |f|^2(x) I_\sigma (|f|^2)(x)dx 
\end{align*}
and
\begin{align*}
\int |\tau|^{-\rho}\|G_\tau\|^2_2 d\tau &= \int |g^2(y)| \Big( \int   |\tau|^{-\rho} \cdot  |g|^2(y+\tau)d\tau\Big)dy
\\
&= \int |g|^2(y) I_\rho (|g|^2)(y)dy .
\end{align*}
The H\"older inequality and the Hardy-Littlewood-Sobolev inequality  yield
\begin{align*}
\int |f|^2(x) I_\sigma (|f|^2)(x)dx
\leq  \big\| |f|^2\big\|_{q'}\big\|I_\sigma (|f|^2)\big\|_q
\leq \| |f|^2\|_{q'}\||f|^2\|_p,
\end{align*}
given 
$$
\frac{1}{p} = 1-\sigma +\frac 1 q    \quad \mbox{and} \quad 0<\sigma <1.
$$
Set $p =q'$, then  
$$
\frac 1 {q'} = \frac{1}{p} = 1-\sigma +\frac 1 q,     
$$
i.e. $\frac 1 {q'} = 1 -\frac \sigma 2$. Thus 
\begin{align*}
\int |f|^2(x) I_\sigma (|f|^2)(x)dx \lesssim \||f|^2\|_{\frac{1}{1- \sigma/ 2}}^2
\end{align*}
and similarly 
\begin{align*}
\int |g|^2(y) I_\rho (|g|^2)(y)dy \lesssim \||g|^2\|_{\frac{1}{1- \rho/ 2}}^2.
\end{align*} 
Therefore 
\begin{align*}
|\Lambda (f,g,h)|\leq \|B(f,g)\|_2\|h\|_2 \lesssim \lambda^{-1/4} \|f\|_p\|g\|_q \|h\|_2
\end{align*}
for 
$$
2<p, q<4 \quad \mbox{and}\quad \frac 1 p +\frac 1 q  = \frac 1 2 (1-\frac \sigma 2)+\frac 1 2 (1-\frac \rho 2) =\frac 3 4. 
$$
This gives (\ref{LO01}). 
Notice that,  in (\ref{pullout}) if we pull out $f$ rather than $h$, then same arguments yield (\ref{LO02}).

It remains to verify (\ref{HO1002}) and (\ref{HO1003}). 
Notice  
\begin{align}\label{to88}
\partial_x\partial_yS_\tau(x,y) = -\int_{0}^\tau DS(x-t, y+t)  dt,
\end{align}
if $\tau >0$; otherwise we need to reverse the limits in the integral. For convenience, we aways assume $\tau >0$.   
The assumptions Theorem \ref{intro} that $|DS(x,y)| \geq 1$ for all $(x,y)\in {\rm Conv}(\supp \phi)$ and the Mean Value Theorem imply that
\begin{align}\label{to}
|\partial_x\partial_yS_\tau(x,y)| \geq |\tau|\, .
\end{align}
A classical result from H\"ormander \cite{HOR73} gives 
\begin{align}
&\|B(f,g)\|_2 ^2
\leq C\int |\lambda \tau|^{-1/2} \|F_\tau\|_2\|G_\tau\|_2 d\tau. 
\end{align}
To verify (\ref{HO1003}), we need the operator van der Corput Lemma from Phong and Stein \cite{PS97}; see also \cite{RY01, GR04}.
\begin{lemma}
Let $\phi(x,y)$ be the same as Theorem \ref{local}.
Let $\mu>0$ and $S(x,y)$ be a smooth function s.t. for all $(x,y)\in {\rm Conv}_{\vec v }(\supp(\phi))$:
\begin{align}\label{PS1}
|\partial_x\partial_yS(x,y)| \gtrsim \mu
\q\q\mbox{and}\q\q
 |\partial_x\partial_y^lS(x,y)| \lesssim \frac{\mu}{\delta_2^{l}} \q\mbox{for}\q l=1,2\,.
\end{align}
Then the operator defined by 
\begin{align}
Tf(x)  =\int e^{i\lambda S(x,y)}f(y)\phi(x,y)dy 
\end{align}
satisfying 
\begin{align}\label{PS2}
\|Tf\|_2 \lesssim |\lambda\mu|^{-\frac 1 2 }\|f\|_2.
\end{align}
\end{lemma}
It is clear (\ref{HO1003}) follows by this lemma once we show that $S_\tau$ and $\phi_\tau$ 
satisfy its assumptions. 
The verification for $\phi_\tau$ is quite straightforward. 
Indeed $\supp (\phi_\tau) \subset \supp (\phi)$ and (\ref{ps1}) follows by the product rule of derivatives.   
To verify (\ref{PS1}), we need to utilize the convexity assumption.  
Let $(x,y_1), (x,y_2)\in \supp \phi_\tau$. 
Then the following four points lie in $\supp \phi$: $(x,y_1), (x,y_2), (x-\tau, y_1+\tau), (x-\tau,y_2+\tau)$, which implies their convex
hull also lies in ${\rm Conv}_{\{\vec v, \vec w\}}(\supp \phi)$. If $(x, y_0)$ is any point between $(x, y_1)$ and $(x, y_2)$ and $t$ is any number between 
$0$ and $\tau$, then $(x-t, y_0+t)$ lies in the convex hull of the above four points and thus in ${\rm Conv}_{\{\vec v, \vec w\}}(\supp \phi)$.    
By (\ref{LE01}) and (\ref{to88}), one has  
\begin{align}
|\partial_x\partial_y S_\tau (x,y_0)| = \left| \int_{0}^\tau DS(x-t, y_0+t)dt\right|  \gtrsim \tau \mu,  
\end{align}
where we have used the Mean Value Theorem in the last inequality. This gives the first estimate of (\ref{PS1}); the second one can be obtained in a same way. 
\end{proof}

%
%
%
%
%
%
%
%
%
%
%
%
%
%
%
%
%
%
%
%
%

\section{Resolution of singularities}\label{PRE1}
In the previous section, we developed the main analytic tool for the proof of the main theorem. 
A crucial step in applying this tool is to establish useful lower bounds of $|DS|$, which is the objective of the current section. 
 More precisely, we employ the resolution algorithm developed in \cite{X2013} to decompose 
a neighborhood of the origin (assuming $DS(0,0)=0$) into finitely many regions on which
$DS$ behaves like a monomial. 

To begin, we introduce the following concepts. 
Let $P(x,y)$ be a real analytic function defined on some neighborhood of the origin. In the proof of our main theorem,
we will take $P(x,y) =DS(x,y)$.  
Write the Taylor series expansion of $P$ as 
\begin{align*}P(x,y)  = \sum\limits_{p,q\in \mathbb N}c_{p,q}x^py^q
\end{align*}
and drop all the terms with $c_{p,q} =0$ in the above expression.
The Newton polygon of $P$ is defined to be  
\begin{align}\label{diagram}
\mathcal N(P) = {\rm Conv} \left(\bigcup_{p,q} \{(u,v)\in \mathbb R^2: u\geq p, v\geq q\,\,\mbox{with}\,\, c_{p,q}\neq 0\} \right).
\end{align}
The Newton diagram is the boundary of $\mathcal N(P)$, which consists of two non-compact edges, a finite (and possibly empty) collection of compact edges $\mathcal E(P)$, and a finite collection of vertices $\mathcal V(P)$. The vertices and the edges are called the faces of the Newton polygon and the set of faces is denoted by $\mathcal F(P)$. The vertex that lies on the bisecting line $p=q$, or if no such vertex exists, the edge that intersects $p =q$ is called 
the main face of $\mathcal N(P)$.
For $F\in \mathcal F(P)$, define 
\begin{align}\label{edgeP}
P_F(x,y) = \sum\limits_{(p,q)\in F} c_{p,q} x^p y^q.
\end{align}
The set supporting lines of $\mathcal N(P)$, denoted by $\mathcal {SL}(P)$, are the lines that intersect the boundary of $\mathcal N(P)$ and do not intersect any other points of $\mathcal N(P)$. 
Notice that each supporting line contains at least one vertex of $\mathcal N(P)$ and each edge of $\mathcal N(P)$ lies in exactly one supporting line. Thus, 
we will also identify an edge with the supporting line containing it.  
There is a one-to-one correspondence $\mathcal M$ between $\mathcal {SL}(P)$ and the set of numbers $[0,\infty]$, given by defining
$\mathcal M(L)$ to be the negative reciprocal of the slope of $L$ for each $L\in \mathcal {SL}(P)$. 
We will often use the notation $L_m\in \mathcal {SL}(P)$ to refer the supporting line with $\mathcal M (L_m)= m$.

\begin{theorem}\label{rs90}
For each analytic function $P(x,y)$ defined in a neighborhood of $0\in\mathbb R^2$, 
there is an $\epsilon>0$ such that, up to a measure zero set, one can 
partition $U= (0,\epsilon)\times (-\epsilon,\epsilon)$ into a finite collection of `curved triangular' regions 
$\{
U_{n,g, {\bsb \alpha}, j}
\}
$
on each of which $P(x,y)$ behaves like a monomial in the following sense. 
For each $U_{n,g, {\bsb \alpha}, j}$, there is change of variables 
\begin{align}\label{CV01}
\rho_n^{-1}(x_n,y_n)=(x,y) 
\end{align}
given by 
\begin{align}\label{change08}
\begin{cases}
 x_n= x 
 \\ 
 y_n= \gamma_n(x) +y_n   x^{m_0+\dots+m_{n-1}}  \end{cases}
\end{align}
with $\gamma_n(x)$ either given by a convergent fractional power series  
 \begin{align}\label{gamma1x}
 \gamma_n(x) = \sum\limits_{k=0}^{\infty} r_kx^{m_{0}+m_{1}+\cdots+m_k},
 \end{align}
or  a polynomial of some fractional power of $x$
 \begin{align}\label{gamma2x}
 \gamma_n(x) = \sum\limits_{k=0}^{n-1} r_kx^{m_{0}+m_{1}+\cdots+m_k},
 \end{align}
such that for any pre-seclected $K\in \mathbb N$, for all $0\leq a, b\leq K$ and $(x,y)\in U_{n,g, {\bsb \alpha}, j}$ one has
\begin{align}
&|P_n(x_n,y_n)| \sim |x_n^{p_{n}}y_n^{q_{n}}| \label{key1}
\\
&|\partial_{x_n}^{a} \partial_{y_n}^{b}P_n(x_n,y_n)| \lesssim  \min\{1, | x_n^{p_{n}-a}y_n^{q_{n}-b} \}\label{key2}|
\end{align}
and
\begin{align}\label{new}
| \partial_{y}^{b}P(x,y)|  \lesssim \min\{1, |x^{p_{n} - b(m_0+\dots +m_{n-1})}y_n^{q_{n}-b}|\}.
\end{align}
Here $P_n$ is defined by
$$
P_n(x_n,y_n) = P (\rho_n^{-1} (x_n,y_n)) = P(x,y)
$$
and $(p_n,q_n)$ is some vertex of the Newton polygon of $P_n$. 
\end{theorem}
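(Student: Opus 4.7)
The plan is to proceed by an iterative Newton--Puiseux type desingularization, processing the Newton polygon of $P$ edge by edge. At each stage, I would decompose the relevant region of the $(x,y)$-plane into (i) a finite collection of vertex (angular) sectors, where $P$ is already well-approximated by a monomial, and (ii) a finite collection of edge sectors, each associated with a root of the edge polynomial; in the latter, I would perform a blow-up along the corresponding approximating curve and recurse on the transformed function.

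More concretely, let $L_{m_0} \in \mathcal{SL}(P)$ be a supporting line with slope $-1/m_0$, and let $(p_0, q_0)$ be a vertex of $\mathcal N(P)$. For directions sufficiently separated from the edge slopes, the region $\{|y| \sim |x|^{m}\}$ with $m$ in an appropriate interval is a vertex sector: one term of the Taylor series dominates, and both \eqref{key1} and \eqref{key2} hold trivially (with $\gamma_0 = 0$, $n=0$). For the remaining ``edge sectors'' around each $m = m_0$ corresponding to an edge, I would factor $P_{L_{m_0}}(x,y) = c\, x^a y^b \prod_j (y - r_j x^{m_0})^{e_j}$ over its distinct roots $r_j$, and cover the edge sector by a bounded number of subregions: one sub-sector around each root $r_j$ (where $|y - r_j x^{m_0}|$ is small compared to $|x^{m_0}|$) and a complementary sub-sector in which all factors $|y - r_j x^{m_0}|$ are comparable to $|x^{m_0}|$. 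On the complementary sub-sector, the full edge polynomial is comparable to its leading monomial, and one again obtains \eqref{key1}--\eqref{key2} directly. On each root sub-sector, I would perform the substitution $y = r_j x^{m_0} + y_1 x^{m_0}$ and apply the same decomposition to the resulting analytic function $P^{(1)}(x, y_1)$; iterating produces the nested substitution \eqref{change08}, with $\gamma_n$ collecting the successive roots $r_k x^{m_0 + \cdots + m_k}$.

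The termination argument is the heart of the matter. I would track an invariant such as the multiplicity of the root $r_j$ in the edge polynomial together with the Newton distance of the transformed polygon; after each blow-up this invariant strictly decreases, so the root-chasing branch must either terminate in finitely many steps with a vertex sector (yielding the polynomial $\gamma_n$ of \eqref{gamma2x}), or the multiplicities stabilize at a positive value, in which case one is following an actual analytic branch of $P^{-1}(0)$ and the series $\sum r_k x^{m_0 + \cdots + m_k}$ of \eqref{gamma1x} converges by the standard Puiseux/Newton theorem. In the latter case, after passing to the branch coordinate $y_n$ defined by \eqref{change08}, the function $P_n$ vanishes to finite order along $\{y_n = 0\}$ and factors (on a small enough sector) as a monomial times an analytic unit, again giving \eqref{key1}--\eqref{key2}.

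To conclude, I would verify \eqref{new} by applying the chain rule to the transformation $\rho_n^{-1}$: each $\partial_y$ translates to $x^{-(m_0+\cdots+m_{n-1})}\partial_{y_n}$, so $b$ derivatives in $y$ correspond to $x^{-b(m_0+\cdots+m_{n-1})}\partial_{y_n}^b$ applied to $P_n$, and \eqref{key2} then yields the stated bound (the $\min\{1,\cdot\}$ arises because the bound on $\partial_{y_n}^b P_n$ is only nontrivial when $q_n \geq b$, and one may always use the global bound $|\partial_y^b P| \lesssim 1$ coming from the smoothness of $P$ on a neighborhood of the origin). The main obstacle in executing this plan rigorously, and the reason the authors defer to \cite{X2013}, is the bookkeeping needed to show that the process produces only \emph{finitely} many curved triangular regions: one must control the number of roots encountered at each stage, verify the monotonicity of the termination invariant uniformly across all branches, and organize the resulting sub-sectors so that their union (up to measure zero) covers the full half-neighborhood $(0,\epsilon)\times(-\epsilon,\epsilon)$.
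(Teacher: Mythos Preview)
Your proposal is correct and mirrors the paper's sketch (which itself defers to \cite{X2013} for full details): vertex sectors where a single Newton-polygon vertex dominates, edge sectors split into good regions away from the roots of $P_{E}(1,\cdot)$ and bad regions near each root where one substitutes $y = x^{m_0}(r_j + y_1)$ and recurses, with the chain-rule derivation of \eqref{new} exactly as you describe. The only refinement worth noting is that the paper's termination mechanism does not rely on a strictly decreasing invariant at every step; rather, Lemma~4.6 of \cite{X2013} shows that any non-terminating branch eventually has a Newton polygon with a single compact edge of the form $c_n(y_n - r_n x_n^{m_n})^{s_{n_0}}$---precisely your ``multiplicities stabilize'' case---which forces convergence of the Puiseux series \eqref{gamma1x}.
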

The meaning of the subindices $(n,g, {\bsb \alpha}, j)$ 
will be clear after the proof of this theorem.  
The reason we phrased this theorem only in the right-half plane is to avoid writing the absolute value of $x$. Indeed, 
by changing $x$ to $-x$, the above theorem also applies to $U=(-\epsilon, 0)\times (-\epsilon, \epsilon)$, consequently to $U= (-\epsilon,\epsilon)\times (-\epsilon,\epsilon)$ and any its open subset. 
See \cite{X2013} for a proof of this theorem; below we provide only a sketch of the theorem for the purpose of outlining some useful ideas and introducing some terminology for later sections. In particular, we will need to further decompose each $U_{n,g, {\bsb \alpha}, j}$ into ``rectangular boxes''.

\subsection{A sketch of the proof}

Let $U$ be as in the theorem above. 
Set 
\begin{align*}
\begin{cases}
U_0 = U,
\\
P_0 = P,
\\
(x_0,y_0) = (x,y),
\end{cases}
\end{align*}
which can viewed as the original input for the algorithm below. 
The subindex 0 is used here to indicate the 0-th stage the iteration. At each stage, iterations are always performed on some triple $[U, P, (x,y)]$. Here $P$ is a convergent (fractional) power series 
 in $(x,y)$, which are local coordinates centered at the origin, and  $U=(0,\epsilon)\times (-\epsilon,\epsilon)$ with $\epsilon>0$ being a small number depending on $P$. For convenience, we refer to such a triple $[U, P, (x,y)]$ as a standard triple. 

Choose one supporting line $L_m\in \mathcal N(P)$ which contains at least one vertex $(p_0,q_0)=V\in\mathcal V(P) $. 
Let $E_l$ and $E_r$ be the two edges on the left and right of $V$, respectively. Set $m_l =\mathcal M(E_l)$ and $m_r =\mathcal M (E_r)$. Then $0\leq m_l\leq m \leq  m_r\leq \infty$. 
We will consider the region $|y|\sim x^m$ in each of the following three cases: 
\textbf{Case (1).} $m_l<m<m_r$, 
\textbf{Case (2).} $m=m_l$ and 
\textbf{Case (3).} $m=m_r$. 
In \textbf{Case (1)}, we have informally that the vertex $V$ `dominates'  $P(x,y)$, while in \textbf{Case (2)} and \textbf{Case (3)} we have that $E_l$ dominates $P(x,y)$ and $E_r$ dominates $P(x,y)$,  respectively.

In \textbf{Case (1)}, $p_0+m{q_0} < p+mq$ for any other $(p,q)$ with $c_{p,q} \neq 0$. 
Thus in the region $|y|\sim x^m$, when $|x|$ it sufficiently small, the monomial 
$$
|P_V(x,y)|=|c_{p_0,q_0}x^{p_0}y^{q_0}|\sim |x^{p_0+mq_0}|
$$ 
is the dominant term in $P(x,y)$, since 
$$
|P(x,y)-P_V(x,y)|=O(x^{p_0+mq_0+\nu}) \q \textrm {for some} \q\nu>0.
$$
 Thus 
\begin{align}
P(x,y)\sim P_V(x,y)=c_{p_0,q_0}x^{p_0}y^{q_0}
\end{align}
and we can make 
$$
\frac {|P(x,y)-P_V(x,y)|}{|P_V(x,y)|}
$$
to be arbitrarily small by choosing $\epsilon $ sufficiently small, i.e., by shrinking the region $U$.  
Moreover, for any pre-selected $a, b\geq 0$, 
\begin{align}\label{DE01}
|\p_x^a\p_y^b P(x,y)|\lesssim \min\{1, |x^{p_0-a} y^{q_0-b}|\}.
\end{align}
We refer such a region as a `good' region defined by the vertex $V$ and denote it by $U_{0,g,V}$. 
\begin{center}
\begin{tikzpicture}[scale=0.8]
\draw [<->,thick] (0,9) node (yaxis) [above] {$y$}
        |- (9,0) node (xaxis) [right] {$x$};
  \draw[help lines] (0,0) grid (8,8);
     \fill[gray!30] (2,8)--(2,4)--(3,2)--(5,1)--(8,1)--(8,8);
       \draw[thick] (5,0)--node[below left] {$l$}(0,5) ;
       \fill (5,1)   circle (2pt) (3,2) circle (2pt) (2,4)  circle (2pt);
        \draw [thick]	(9,1)--
        			(5,1) node[below left] {$V_3$}--
			(3,2)node[below left] {$V_2$}--
			(2,4)node[below left] {$V_1$}--
			(2,9);
	\node at (5,5) {Newton polygon};
	\node at (2.7,4){(2,4)};
	\node at (3.5,2.3) {(3,2)};
	\node at (5,1.5) {(5,1)};
	\node at (11,6) {$P(x,y) = x^5y-x^3y^2+x^2y^4$};
	\node at (10.2,5) {$P_{V_2}(x,y) = -x^3y^2$};
	\node at (11,4) {$|x|^{2}\lesssim |y| \lesssim |x|^{1/2}$};
	\node at (7,9.5) { \emph{Case (1): The vertex} $V_2$\emph{ is dominant, where} $1/2< m < 2$};
	\node at (4.5,-1) {Figure 1.};
\end{tikzpicture}
\end{center}

\begin{center}
\begin{tikzpicture}[scale=0.8]
\draw [<->,thick] (0,9) node (yaxis) [above] {$y$}
        |- (9,0) node (xaxis) [right] {$x$};
  \draw[help lines] (0,0) grid (8,8);
     \fill[gray!30] (2,8)--(2,4)--(3,2)--(5,1)--(8,1)--(8,8);
       \fill (5,1)   circle (2pt) (3,2) circle (2pt) (2,4)  circle (2pt);
        \draw [thick]	(9,1)--
        			(5,1) node[below left] {$V_3$}--
			(3,2)node[below left] {$V_2$}--
			(2,4)node[below left] {$V_1$}--
			(2,9);
	\node at (5,5) {Newton polygon};
	\node at (2.7,4){(2,4)};
	\node at (3.5,2.3) {(3,2)};
	\node at (5,1.5) {(5,1)};
	\node at (11,6) {$P(x,y) = x^5y-x^3y^2+x^2y^4$};
	\node at (11,5) {$P_{V_1V_2}(x,y) = -x^3y^2+x^2y^4$};
	\node at (12,4) {$|y|\sim |x|^{1/2}$};
	\draw[thick] (0,8)--(4,0);
	\node at (6,9.5) { \emph {Case(2): The edge} $V_1V_2$ \emph{ is dominant, where} $ m =1/ 2$};
		\node at (4.5,-1) {Figure 2.};

\end{tikzpicture}
\end{center}

 \textbf{Case (2)} and \textbf{Case (3)} are essentially the same, but are much more complicated than \textbf{Case (1)}.  We focus on \textbf{Case (2)}. 
 Set  $m_0 =m_l$, $P_0 =P$ and $E_0 =E_l$. One can see that $p_0+m_0q_0 =p+m_0q$ for all $(p,q)\in E_0$ and $p_0+m_0q_0 <p+m_0q$
  for all $(p,q) \notin E_0$ and $c_{p,q}\neq 0$. 
 Then for all $(p,q)\in E_0$, 
 $|x^{p}y^q|\sim |x^{p_0}y^{q_0}|$ in the region $|y| \sim |x|^{m_0}$. Set $y  = rx^{m_0}$ and let $\{r_*\} $ be the set of nonzero roots of $ P_{0,E_0}(1,y)$ with orders $\{s_*\}$.  
 \begin{definition}\label{VO1}
 For each compact edge $E_0$ of $\mathcal N(P)$, 
the number $\max \{s_*\}$ is called the vanishing order of $E_0$ in the right half-plane. 
The vanishing order in the left half-plane is defined similarly by considering the orders of all non-zero roots
of $P_{0,E_0}(-1,y)$. Finally, we define the vanishing order of $E_0$ to be the larger one between them.    
\end{definition}
 
 {From the definitions of $\alpha, \beta, \gamma, d_0$ and $d_1$ in the first section, we have $\max\{s_*\}\leq \max\{\gamma, d_0\}$
  if $\mathcal M(E_0) =1$ and  
 $\max\{s_*\}\leq d_1$ if $\mathcal M(E_0) \neq1 $.  Notice that to compute $d_1$, one needs to compute the maximum $\max\{s_*\}$ over all edges $E_0$ with $\mathcal M(E_0)\neq -1$ in the following three coordinate systems: $(x,y)$, $(x,x+y)$ and $(y, x+y)$.  }

Let $\rho_0>\epsilon$ be a small number chosen so that the $\rho_0$ neighborhoods of 
all the roots $r_*$ will not overlap. 
If $|r-r_*| \geq\rho_0$ for all roots $r_*$ and if $\epsilon$ sufficiently small,  then  
 \begin{align}\label{cool}
|P(x,y)|= |P_0(x,y)| \sim_{\rho_0} |P_{0,E_0}(x,y)|\sim |x^{p_0}y^{q_0}|\sim x^{p_0+m_0{q_0}},
 \end{align}
which means the edge $E_0$ dominates
 $P_0(x,y)$. Similarly, for any preselected $a, b \geq 0$, (\ref{DE01}) still holds.  
 We refer such regions as `good' regions defined by the edge $E_0$ and denote them by $U_{0, g, E_0, j}$'s. 
It is of significance to observe that one can enlarge each $U_{0, g, E_0, j}$ by decreasing the value of $\rho_0$, while 
$|P(x,y)|\sim  |x^{p_0}y^{q_0}|$ still holds but with the new implicit constant depending on the new $\rho_0$. 
However, one should not worry about the dependence on those $\rho_0$, for they will be chosen to 
rely only on the original function $P$. We will use this observation to further decompose  $U_{0, g, E_0, j}$ into rectangular boxes later.

 For each root $r_0\in \{r_*\}$, there is an associated `bad' region defined by $y=rx^{m_0}$ where $|r-r_0|< \rho_0$ and $0<x<\epsilon$.
 We say that this is a bad region defined by the edge $E_0$ (and the root $r_0$).
 Each bad region is carried to the next stage of iteration via change of variables. Set
 \begin{align*}
 \begin{cases}
 x =x_1,
 \\
 y =x_1^{m_0}(r_0+y_1).
 \end{cases}
 \end{align*}
 Notice that this bad region is contained in the set where $|y_1 |= |r-r_0| <\rho_0$, so that in $(x_1,y_1)$ coordinates, the bad region is now $0<x_1<\epsilon$ and $-\rho_0<y_1<\rho_0$. 
Set
 \begin{align*}
 P_1(x_1,y_1) = P_0(x_1, r_0 x_1^{m_0} +y_1x_1^{m_0}), 
 \end{align*}
 and for any choice of $\epsilon_1 > \rho_0$, let
 \begin{align*}
 U_1 =\{(x_1,y_1): 0<x_1<\epsilon_1, |y_1|< \epsilon_1 \}.
 \end{align*}
 Then $[U_1, P_1, (x_1,y_1)]$ is a standard triple and same arguments can be applied again. 
Notice that the edge $E_0$ in $\mathcal N(P)$ is ``collapsed'' into the leftmost vertex of $\mathcal N(P_1)$. More precisely,  
 if $(p_{1,l}, q_{1,l}) $ is the leftmost vertex of $\mathcal N(P_1)$, then 
 \begin{align}\label{key88}
(p_{1,l}, q_{1,l}) = (p_0 +m_0q_0, s_0).
 \end{align}
 This is a crucial bookkeeping identity for understanding the behavior of $P(x,y)$ in later stages of the iteration. 
Now we repeat the previous arguments. If either a vertex or an edge is dominant, then 
\begin{align*}
|P_1(x_1,y_1)| \sim |x_1^{p_1} y_1^{q_1}|
\end{align*} 
 for some vertex $(p_1,q_1)$ of $\mathcal N(P_1)$. Otherwise
there is an edge $E_1\in\mathcal E(P_1)$ and a non-zero root $r_1$ of $P_{1, E_1}(1, y_1)$ together with a neighborhood 
$(r_1 -\rho_1,r_1+\rho_1)$ that define a bad region. 
 Change variables to
  \begin{align*}
 \begin{cases}
 x_1 =x_2
 \\
 y_1 =x_2^{m_1}(r_1+y_2)
 \end{cases}
 \end{align*}
 and set 
 \begin{align*}
 P_2(x_2,y_2)  = P_1( x_2, x_2^{m_1}(r_1+y_2)).
 \end{align*}
Now we iterate the above argument. In the $k$-th stage of iteration, if either a vertex or an edge is dominant, then
\begin{align}\label{ESk}
|P(x,y)| = |P_k(x_k,y_k)|\sim |x_k^{p_k}y_k^{q_k}|
\end{align}
and 
\begin{align}\label{DEk}
|\p_{x_k}^a\p_{y_k}^b P_k(x_k,y_k) |\lesssim \min\{ 1, |x_k^{p_k-a}y_k^{q_k-b}| \}.
\end{align}
Here $(p_k,q_k)$ is a vertex of $\mathcal N(P_k)$ and 
\begin{align*}
\begin{cases}
x_{j-1} = x_{j},
\\
y_{j-1} = (r_{j-1}+y_j)x^{m_{j-1}},
\end{cases}
\end{align*}
for $1 \leq j\leq k$, i.e.,
\begin{align*}
\begin{cases}
x = x_0 =\dots = x_k
\\
y = y_0 = r_0x_0^{m_0}+r_1x_0^{m_0+m_1}+\dots +r_{k-1}x_0^{m_0+\dots+m_{k-1}}+y_kx^{m_0+\dots +m_{k-1}}.
\end{cases}
\end{align*}
The iterations above form a tree structure and each branch of this tree yields a chain of standard triples  
\begin{align}\label{chain70}
[U,P,(x,y)]=[U_0,P_0,(x_0,y_0)]\to[U_1,P_1,(x_1,y_1)]\to\dots\to [U_k,P_k,(x_k,y_k)]\to\cdots.
\end{align}
One may hope each such chain is finite, but unfortunately, this is not necessarily the case. For those that terminate after finitely many steps, say at the $n$-th stage of iteration, it must be the case that no bad regions are generated by $[U_n,P_n, (x_n,y_n)]$, and so we obtain Theorem \ref{rs90} with the corresponding change 
of variables given by (\ref{gamma2x}).  

For each of those branches that does not terminate,
one can show that (see Lemma 4.6 \cite{X2013}), there exists  $n_0\in\mathbb N$ such that for $n\geq n_0$, 
 $\mathcal N(P_n)$ has only one compact edge $E_n$ and $P_{n,E_n}(x_n,y_n)= c_n(y_n-r_nx_n^{m_n})^{s_{n_0}}$,
 where $s_{n_0}$ is the order of the root $r_{n_0}$. On such a branch, one should instead perform  
 the following change of variables:
 $$
y_{n_0} =  y_{n_0+1}x_{n_0}^{m_{n_0}}+ \sum\limits_{k=n_0}^{\infty} r_kx_{n_0}^{m_{n_0}+m_{n_0+1}+\cdots+m_k}.
 $$
Under this alternate change of variables, iteration along this branch stops immediately at stage $n_0+1$. This corresponds to the change of variables in (\ref{gamma1x}). 
Since the total number of branches in the iteration is bounded above (see Lemma 4.5  in \cite{X2013}), the modified algorithm fully terminates after finitely many steps. In particular, the total number of good regions are also finite. 

We now give some explanation of the subindices $({n,g, {\bsb \alpha}, j})$ in Theorem \ref{rs90}. 
The letter `$n$' represents the stage of iterations, `$g$' indicates the region is `good',  `${\bsb \alpha}$' contains the information necessary to make the change of variables 
from $[U_0,P_0,(x_0,y_0)]$ to a specific $[U_n,P_n,(x_n,y_n)]$ and `$j$' is used to list the all the `good' regions generated by  $[U_n,P_n,(x_n,y_n)]$. 
The cardinality of the tuples $({n,g, {\bsb \alpha}, j})$ is finite and depends on the original function $P$.
We often use $U_{n,g}$ to represent  $U_{n,g}= U_{n,g,\bsb\alpha,j}$ for some $\bsb\alpha$ and some $j$. 

The identity (\ref{key88}) is very useful to help estimate $P(x,y)$ in higher stages of the iteration. Indeed, 
consider the $k$-th stage of iteration and let $U_{k,g, {\bsb \alpha}, j}$ be a good region. Then 
$|P(x,y)| =|P_k(x_k,y_k)| \sim |x_k^{p_k} y_k^{q_k}|$, for some vertex $V_k=(p_k,q_k)$ of $\mathcal N(P_k)$. 
For $0 \leq j \leq k-1$,
let $(p_{j, l},q_{j,l})$ be the leftmost vertex of $\mathcal N(P_j)$, and furthermore let $(p_j,q_j)$ be the left vertex of the
edge $E_j$ and let $r_j$ be the root that governs the next stage of iteration.
Assume $s_j$ is the order of the root  $r_j$. 
 We claim that for $0\leq j\leq k-1$, 
\begin{align}\label{ind1}
\begin{cases}
p_{j+1,l} = p_{j} +q_{j}m_j , 
\\
 q_{j+1,l}  =s_j \leq q_j,
\\
p_{j}+q_jm_j \leq p_{j,l}+q_{j,l}m_j,
\\
\end{cases}
\end{align}
and if $L_{m_k}$ is a supporting line of $\mathcal N(P_k)$ through $V_k$, then 
\begin{align}\label{IT5}
p_{k}+m_k q_k \leq p_{k,l}+m_k q_{k,l}  \leq  p_0+m_0q_0 +s_0\sum_{1\leq j \leq k-1} m_j.
\end{align}
Indeed the first two identities in (\ref{ind1}) are of the same nature as the one in (\ref{key88}). 
The third one comes from the fact that the vertex $(p_{j,l},q_{j,l})$ lies on or above $E_j$, which is the
 supporting line through $(p_j,q_j)$ of slope $-1/m_j$. 
Iterating (\ref{ind1}) yields (\ref{IT5}).

\subsection{A smooth partition}
Let $U$ be a small neighborhood of the origin such that one can apply Theorem \ref{rs90} to it. 
 Divide $U$ (up to a set of measure zero) into four different regions: $U_E$, $U_N$, $U_W$ and $U_S$
 (representing the east, north, west and south portions), defined by 
\begin{align}
\begin{cases}
U_E= \{(x,y)\in U: x>0, -Cx < y < Cx\}
\\
U_W = -U_E
\\
U_N = \{(x,y)\in U: C |x| < y \}
\\
U_S= -U_N.
\end{cases}
\end{align}
We choose the above constant $C>0$ such that $2^{-10}C$ is greater than the absolute value of any root of 
$P_E(1,y)$ or $P_E(-1,y)$, where $E$ is the edge of slope $-1$. 
We shall then define smooth functions $\phi_E$, $\phi_N$, $\phi_W$ and $\phi_S$ whose supports are contained in 
$\frac 2 3 U$ and such that for any function $\Psi$ supported in $\frac 1 2 U$, the
following holds :
\begin{align*}
\Psi(x,y) = (\phi_E(x,y)+\phi_N(x,y)+\phi_W(x,y)+\phi_S(x,y))\Psi(x,y) \ \ {\rm for}\,\, (x,y)\neq 0.   
\end{align*} 
Moreover, $\phi_E$ is essentially supported in $U_E$, in the sense that $\supp \phi_E\subset  U^*_E$, where 
\begin{align}
U_E^* = \{(x,y)\in U: -2Cx <y <2C x\},
\end{align} 
and similarly for $\phi_N$ and so on.  

In what follows, we focus on $\phi_E$ and $U_E$. Similar results for $U_W$, $U_N$ and $U_S$ can be obtained
by changing $x$ to $-x$, switching $y$ and $x$, and making both changes, respectively. 
Notice $|y|\lesssim |x|$ for $(x,y)\in U_E^*$, and by shrinking $U$ if necessary, one has $m_0 \geq 1$ if $U_{n,g,{\bsb\alpha},j}$ has nonempty intersection with $U_E^*$, where $m_0$ is the exponent of the first term of $\gamma_n(x)$ in (\ref{gamma1x}) or (\ref{gamma2x}). 
We will partition $\phi_E$ into a sum of smooth functions $\phi_\mathcal R$
such that each $\phi_\mathcal R$ is essentially supported in a box $\mathcal R$ which is essentially contained in one $U_{n,g,{\bsb\alpha},j}$.  
Consequently $P(x,y)$ still behaves like a monomial in $\supp \phi_\mathcal R$ and Theorem \ref{local} is applicable. 
The rest of this section is devoted to constructing this smooth partition. In what follows, let $U_{n,g} =  U_{n,g,{\bsb\alpha},j}$ for some 
${\bsb\alpha},j$. 

Firstly, 
in the algorithm above, we have the flexibility to adjust the ``sizes'' of $U_{n,g}$ by modifying the values of $\rho_n$ at each stage of iteration. 
In particular, one can enlarge 
each $U_{n,g}$ to $U^*_{n,g}$, 
  i.e.,
$
U_{n,g} \subset U^*_{n,g}
$,
so that the estimate $|P(x,y)|\sim |x_n^{p_n}y_n^{q_n}|$ still holds for $(x,y)\in U^{*}_{n,g}$ 
 with a different implicit constant. This is done by moving up the upper boundary of $U_{n,g}$ and moving down the 
lower boundary of $U_{n,g}$ simultaneously. The following example illustrates this idea.
\begin{example}\label{EX01}
For simplicity, we assume $U_{n,g} $ is defined by an edge $E_n$ and is lying in the first quadrant. 
Then $U_{n,g} $ is equal to the intersection of $U$ with the curved triangular region 
given by 
\begin{align}
\xi(x) := \gamma_n(x) +a x^{m_0+\dots +m_n} \leq y \leq  \gamma_n(x) +Ax^{m_0+\dots +m_n} :=\eta(x)
\end{align}
for some constants $0<a<A$.  
Recall that the good regions defined by $E_n$ are the regions given by $y_n = r x_n^{m_n}$ with
$r$ lying outside the $\rho_n$ neighborhoods of the roots $\{r_{n,j}\}_{j}$ (listed in the increasing order) of $P_{n, E_n}(1,y_n)$. Here $m_n =\mathcal M(E_n)$ and 
$P_{n, E_n}$ is the restriction of $P_n$ to the edge $E_n$. Then there is a $j_0$ such that $a = r_{n,j_0}+\rho_n$ and $A= r_{n,j_0+1}-\rho_n$.  
 One can then adjust $\rho_n$ to $\rho_n^* =\frac {\rho_n}  2 $, 
set $a^* = r_{n,j_0}+\rho^*_n$ and $A^*= r_{n,j_0+1}-\rho^*_n$ 
and define $ U^{*}_{n,g}$ to be the intersection of $U$ with 
$$
\xi^*(x): = \gamma_n(x) +a^* x^{m_0+\dots +m_n} \leq y \leq  \gamma_n(x) +A^*x^{m_0+\dots +m_n}:=\eta^*(x).
$$
In these new enlarged good regions, $P(x,y)$ still behaves like a monomial, i.e. (\ref{ESk}) and (\ref{DEk}) still hold but with different implicit constants (depending on $\rho_n^*$). 
\end{example}

The next step is to cover $ U_{n,g}$ by a collection of rectangular boxes $\mathcal R$ such that for some fixed constant $\tau>1$ and for each $\mathcal R$, the dilation 
 $\tau\mathcal R$ lies in $ U^{*}_{n,g}$. 
  We shall illustrate this covering in the context of Example \ref{EX01} and briefly comment on how to handle other cases at the end.   Dyadically decompose $U_{n,g}$ as
\begin{align*}
U_{n,g}(\sigma) = \{(x,y)\in U_{n,g}: x\in I_\sigma\}.  
\end{align*} 
where $I_\sigma= [\sigma,2 \sigma)$. 
Set
\begin{align}
\begin{cases}\label{sim07}
&\Delta x = \frac {\sigma ^{m_0+\dots +m_n} \cdot  \sigma^{1-m_0}} {K},
\\
&\Delta y = \sigma ^{m_0+\dots +m_n},
\end{cases} 
\end{align}
where $K\in\mathbb N$ is a constant (to be determined later) depending on $P$ but independent of $\sigma$. Roughly speaking, $\Delta x $ and $\Delta y$ are proportional   
to the measures of a generic $x$-cross section and a generic $y$-cross section of $U_{n,g}(\sigma)$, respectively. 
Equally divide $I_\sigma$ into subintervals $\{I_{\sigma,h}\}_{0\leq h \leq H}=\{[\sigma_{h}, \sigma_{h+1})\}_{0\leq h \leq H}$ 
of length $\Delta x$, where 
$\sigma_h = \sigma+ h\Delta x$ and $H= \frac \sigma {\Delta x}-1 $.  Let $\mathcal R_{\sigma, h}$ be the rectangle whose sides are 
parallel to the coordinate axes with $(\sigma_{h}, \xi(\sigma_h))$ and $(\sigma_{h+1}, \eta(\sigma_{h+1}))$ as its
 lower left and upper right vertices.  
Let $\mathcal R^*_{\sigma, h}$ be defined similarly but with the lower left and upper right vertices replaced by 
$(\sigma_{h-1}, \xi(\sigma_{h-1}))$ and $(\sigma_{h+2}, \eta(\sigma_{h+2}))$. It is obvious that 
the union of $\{\mathcal R_{\sigma, h}\}_{0\leq h\leq H}$ covers $U_{n,g}(\sigma)$ and for notational convenience, we will use 
$\{\mathcal R\}_{\mathcal R\in U_{n,g}(\sigma)}$
to denote this collection of rectangular boxes. 
Beyond this,  we also have the following claim.
\begin{claim}\label{CL01}
One can choose $K$ independent of $\sigma$ such that $\mathcal R^*_{\sigma, h}\subset U_{n,g}^*$ for all $0\leq h \leq H$.  
In addition, there is a constant $\tau>1$ independent of $\sigma$ such that the dilation $\tau \mathcal R_{\sigma, h} \subset \mathcal R^*_{\sigma, h}$. 
\end{claim}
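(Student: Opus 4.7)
The plan is to exploit a separation of scales. The vertical margin between $U_{n,g}$ and its enlargement $U_{n,g}^*$ is $\Theta(\rho_n \sigma^M)$ (writing $M := m_0 + \cdots + m_n$), coming from the shifts $a - a^* = A^* - A = \rho_n/2$ in the defining inequalities. On the other hand, the oscillation of the curves $\xi$ and $\eta$ across any subinterval of length $\Delta x$ in $[\sigma, 2\sigma]$ is $O(\sigma^M/K)$. This bound will follow from the mean value theorem once one shows $|\xi'(x)|, |\eta'(x)| = O(x^{m_0-1})$, which in turn follows by termwise differentiation of $\gamma_n(x) = \sum_k r_k x^{m_0 + \cdots + m_k}$ (leading term $r_0 m_0 x^{m_0-1}$, with $(ax^M)' = O(x^{M-1}) = O(x^{m_0-1})$ since $M \geq m_0$). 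Plugging in $\Delta x = \sigma^{M+1-m_0}/K$ produces the claimed oscillation estimate. Taking $K$ sufficiently large in terms of $\rho_n$, but independent of $\sigma$, will force this oscillation to be dwarfed by the margin.

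First I would verify $\mathcal R^*_{\sigma,h} \subset U_{n,g}^*$. Given $(x,y) \in \mathcal R^*_{\sigma,h}$, so that $x \in [\sigma_{h-1}, \sigma_{h+2}]$ and $y \geq \xi(\sigma_{h-1})$, I decompose
\[
\xi(\sigma_{h-1}) - \xi^*(x) \;=\; \bigl(\xi(x) - \xi^*(x)\bigr) \;+\; \bigl(\xi(\sigma_{h-1}) - \xi(x)\bigr).
\]
The first summand equals $(a - a^*) x^M \geq c \rho_n \sigma^M$ with $c > 0$ absolute, while the second is $O(\sigma^M/K)$ by the oscillation estimate. Hence $y \geq \xi(\sigma_{h-1}) \geq \xi^*(x)$ once $K$ is large enough in $\rho_n$. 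The symmetric computation with $\eta,\eta^*$ yields $y \leq \eta^*(x)$, giving the containment.

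Next I would check $\tau \mathcal R_{\sigma,h} \subset \mathcal R^*_{\sigma,h}$. The widths are $\Delta x$ versus $3\Delta x$, so horizontally any $\tau \leq 3$ fits. Vertically, monotonicity of $\xi$ on $[\sigma, 2\sigma]$ (valid for small $\sigma$ because the sign of $\xi'$ is controlled by its leading term $r_0 m_0 x^{m_0-1}$) ensures that the bottom of $\mathcal R^*_{\sigma,h}$ sits below the bottom of $\mathcal R_{\sigma,h}$ by $\xi(\sigma_h) - \xi(\sigma_{h-1}) = \Theta(\sigma^M/K)$, and the top is handled symmetrically. Since the height of $\mathcal R_{\sigma,h}$ is $\sim (A-a)\sigma^M$, the vertical gap is a fixed fraction $\sim 1/(K(A-a))$ of the height, so $\tau = 1 + c/K$ works uniformly in $\sigma, h$ for small enough $c > 0$.

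The main obstacle is uniformity in $\sigma$: every error term produced by the Taylor/MVT estimates must collapse to the single scale $O(\sigma^M/K)$. This is precisely what forces the seemingly ad hoc choice $\Delta x = \sigma^{M+1-m_0}/K$, and the argument rests on the fact that the leading term of $\gamma_n$ dominates on $[\sigma, 2\sigma]$ for small $\sigma$. The other situations alluded to after the claim — vertex-defined $U_{n,g}$, or $\gamma_n$ given by a fully convergent fractional power series — go through identically once the same derivative bound $\gamma_n'(x) = O(x^{m_0-1})$ is confirmed in each setting.
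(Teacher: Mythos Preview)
Your proposal is correct and follows essentially the same approach as the paper: both arguments hinge on the two-sided derivative bound $|\xi'(x)|,|\eta'(x)| \sim x^{m_0-1}$ on $[\sigma/2,2\sigma]$, use the Mean Value Theorem to show the oscillation of $\xi,\eta$ across a $\Delta x$-interval is $O(\sigma^M/K)$, and compare this against the fixed margin $\frac{\rho_n}{2}x^M \gtrsim \rho_n\sigma^M$ between $U_{n,g}$ and $U_{n,g}^*$. The only cosmetic difference is that the paper verifies the containment $\mathcal R^*_{\sigma,h}\subset U^*_{n,g}$ by checking the two extremal corners $B',D'$ (reducing to $\xi(\sigma_{h-1})>\xi^*(\sigma_{h+2})$ and the analogous inequality for $\eta$), whereas you check an arbitrary point; since $\xi^*,\eta^*$ are monotone these amount to the same inequality.
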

\begin{proof}[Proof of Claim \ref{CL01}]
Label the vertices of $\mathcal R_{\sigma, h}$ as $A, B, C, D$ in a counter-clockwise fashion beginning with $A$ as the lower left vertex. Following the same process, label the vertices of 
$\mathcal R^*_{\sigma, h}$ by $A', B', C', D'$. Then both $A$ and $A'$ lie on the lower boundary of $U_{n,g}$ and both $C$ and $C'$
lie on the upper boundary. To prove the first part, it suffices to show that $B' $ and $D'$ are both in $U_{n,g}^*$, which amounts to proving that 
\begin{align}\label{TO39}
\xi(\sigma_{h-1}) > \xi^{*}(\sigma_{h+2}) \q {\rm and}\q \eta^{*}(\sigma_{h-1})> \eta(\sigma_{h+2})
\end{align} 
for an appropriate choice of $K$. 
Recall that $\gamma_n(x)$ equals $r_0x^{m_0}$ plus higher-order terms. One can choose $\epsilon$ small such that
 \begin{align}
 \frac 1 2 m_0 |r_0| x^{m_0-1} \leq 
 |\xi'(x)|, |\eta'(x)| \leq 2m_0 |r_0| x^{m_0-1}\q {\rm for \,\, all } \q 0<x <\epsilon. 
 \end{align} 
 In particular, one has
 \begin{align}\label{TU90} 
c_0\sigma^{m_0-1}\leq  |\xi'(x)|, |\eta'(x)| \leq C_0\sigma^{m_0-1}\q {\rm  for\,\, all} \q \frac 1 2 \sigma < x\leq 2\sigma
 \end{align}
 with $c_0 =  m_0 |r_0| 2^{-m_0}$ and  $C_0 =2m_0 |r_0| 2^{m_0-1}$. 
Then by the Mean Value Theorem and the above estimates for $|\xi'(x)|$, one has 
$$
\xi(\sigma_{h+2})- \xi(\sigma_{h-1}) \leq C_0 \sigma^{m_0-1} (\sigma_{h+2} -\sigma_{h-1})
= \frac {3C_0} {K} \sigma^{m_0+\cdots+m_n}. 
$$
Notice also that
$$
\xi(\sigma_{h+2})- \xi^*(\sigma_{h+2}) =  \frac {\rho_n} 2  \sigma_{h+2}^{m_0+\cdots+m_n}\geq 
\frac {\rho_n} { 2} \sigma^{m_0+\dots+m_n}, 
$$
because one has the trivial estimates $\sigma_{h+2} \geq  \sigma_{0}= \sigma$.  
The first estimate in (\ref{TO39}) follows by comparing the above two estimates and by choosing
$$
K > \frac {6C_0} {\rho_n}. 
$$
To verify the second estimate in (\ref{TO39}), similar arguments yield
$$
\eta (\sigma_{h+2}) -\eta (\sigma_{h-1}) \leq C_0 \sigma^{m_0-1} (\sigma_{h+2} -\sigma_{h-1}) =\frac {3C_0} {K} \sigma^{m_0+\cdots+m_n} 
$$
and 
$$
\eta^*(\sigma_{h-1})-\eta (\sigma_{h-1}) =  \frac {\rho_n} 2  \sigma_{h-1}^{m_0+\cdots+m_n}\geq 
\frac {\rho_n} 2 2^{-(m_0+\cdots+m_n)}  \sigma^{m_0+\cdots+m_n}
$$
for $\sigma_{h-1} \geq \sigma_{-1} \geq \frac 1 2 \sigma_0 = \frac 1  2 \sigma$. 
Then the second (and the first) estimate of (\ref{TO39}) follows by choosing 
$$
K > \frac {6C_0}{\rho_n}2^{m_0+\dots+m_n}. 
$$
We turn to the second part of the claim and let $K$ be a fixed constant satisfying the above requirements. Notice 
$$
{\rm dist}(AB, CD) = |\eta(\sigma_{h+1}) -\xi(\sigma_{h})| \leq  
 |\eta(\sigma_{h+1}) -\xi(\sigma_{h+1})|+ |\xi(\sigma_{h+1}) -\xi(\sigma_{h})| 
$$ 
which is bounded by $C\Delta y$ for some constant $C$.  Here ${\rm dist}(\cdot, \cdot)$ denotes the distance between two parallel lines.
 Since 
$$
{\rm dist}(AB, A'B') = |\xi(\sigma_{h}) -\xi(\sigma_{h-1})| \geq c_0 \sigma^{m_0-1}  \Delta x =\frac {c_0} K\Delta y, 
$$ 
$$
{\rm dist}(CD, C'D') = |\eta(\sigma_{h+2}) -\eta(\sigma_{h+1})| \geq c_0 \sigma^{m_0-1}  \Delta x =\frac {c_0} K\Delta y, 
$$ 
and 
$$
{\rm dist}(AD, BC) = {\rm dist}(AD, A'D') = {\rm dist}(BC, B'C') = \Delta x,
$$
it is obvious that there is a $\tau >1$ independent of $\sigma$ such that 
$\tau \mathcal R_{\sigma, h} \subset \mathcal R^*_{\sigma, h}$. 
\end{proof}
We have now finished the second step. At this point we will briefly discuss the construction of similar coverings for cases other than Example \ref{EX01}. Since the case when $U_{n,g}$ is  defined by an edge lying in the fourth quadrant
is very similar, we focus on the case when $U_{n,g}$ defined by a vertex $V_n$ and also assume it lies in the first quadrant. 
Then $U_{n,g}$ is equal to the intersection of $U$ with the region
\begin{align}
\gamma_n(x) +C_r x^{m_0+\dots +m_{n-1}+m_{n,r }}\leq   y \leq  \gamma_n(x) +C_l x^{m_0+\dots +m_{n-1}+m_{n,l }},
\end{align}
where $-1/m_{n,l}$ and $-1/m_{n,r}$ the are slopes of the edges to the left and right of $V_n$ and $C_r, C_l>0$ are finite constants. 
For convenience, we write $m_n = m_{n,l}$. 
Here we also need to dyadically decompose in $y$. Let 
$\sigma, \rho$ be dyadic numbers and set 
\begin{align}
U_{n,g}(\sigma,\rho)= \{(x,y)\in U_{n,g}: \sigma \leq x <2\sigma,  \rho x^{m_{n}}\leq y_n <2\rho x^{m_{n}} \},  
\end{align}
where $y = \gamma_n(x) +y_nx^{m_0+\dots +m_{n-1}}$.
The same arguments employed in Example \ref{EX01} can be then applied to $U_{n,g}(\sigma,\rho)$, 
yielding a collection of rectangular boxes $\{\mathcal R\}_{\mathcal R\in {U_{n,g}}(\sigma,\rho)}$ whose union covers
$U_{n,g}(\sigma,\rho)$. The dimensions of each $\mathcal R$ are roughly $\Delta x \times \Delta y$, where in the case $n\geq 1$ 
\begin{align}\label{sim08}
 \begin{cases}
 &\Delta x = \frac { \rho\sigma^{m_0+\dots +m_n} \cdot \sigma^{1-m_0} }{K},
 \\
& \Delta y = \rho\sigma^{m_0+\dots +m_n},
\end{cases}
 \end{align}
and in the case $n=0$
\begin{align}\label{sim18}
 \begin{cases}
 &\Delta x = \frac {\sigma}{K},
 \\
& \Delta y = \rho\sigma^{m_0}. 
\end{cases}
 \end{align}
 Here $K$ is a constant independent of $\sigma$ and $\rho$ which fills the same function as the constant in Example \ref{EX01}. 
 To be precise, $K\in\mathbb N$ is selected such that
there is a constant $\tau>1$ independent of $\sigma$ and $\rho$ such that $\tau \mathcal R$ lies in $U_{n,g}^*(\sigma,\rho)$ for each
$\mathcal R\in U_{n,g}$. 
Here $U_{n,g}^*(\sigma,\rho)$ is an enlarged version of $U_{n,g}(\sigma,\rho)$ defined similarly to the analogue in Example \ref{EX01}.

For notational convenience, in the case when $U_{n,g}$ is defined by an edge, we shall also use $U_{n,g}(\sigma,\rho)$ to denote $U_{n,g}(\sigma)$ and keep in mind that $\rho\sim 1$, even though we do not need decompose $U_{n,g}(\sigma)$ in the $y$-direction. 
 
Let us pause briefly to observe that collection operators associated to the boxes $\{\mathcal R\}_{\mathcal R\in U_{n,g}(\sigma,\rho)}$ will exhibit certain orthogonality properties which we will discuss now.
 Indeed, let $\pi_x$ and $\pi_y$ denote the othogonal projections of $\mathbb R^2$ onto the $x$- and $y$-axis, respectively. 
Then there exists a positive constant $C\in\mathbb N$
 independent of $\sigma$ and $\rho$ such that 
 \begin{align}\label{OT67}
\sum_{\mathcal R\in U_{n,g} (\sigma,\rho)} \Id_{\pi_x(\mathcal R)}(x)  \leq C \q 
\textrm{and}\q \sum_{\mathcal R\in U_{n,g} (\sigma,\rho)} \Id_{\pi_y(\mathcal R)}(y) \leq C. 
 \end{align}
Notice that $C$ can be also independent of each $U_{n,g}$, for the cardinality of the collection of good regions $U_{n,g,{\bsb\alpha}, j}$ is finite. 
This orthogonality is important in the $L^2\times L^2 \times L^2$ estimates, 
but not in the $L^\infty\times L^\infty \times L^\infty$ here. 
What will be useful in its place is the cardinality estimate
\begin{align}\label{card01}
\# \{\mathcal R\}_{\mathcal R\in U_{n,g}(\sigma,\rho)} \sim \frac \sigma  {\Delta x}. 
\end{align}
 
Returning to the resolution algorithm, the third step is to complete the smooth partition. Let $\phi_\mathcal R$ be a non-negative 
smooth function essentially supported in $\mathcal R$, in the sense that it satisfies 
 \begin{align}
\mathcal R\subset  \supp \phi_{\mathcal R} \subset \tau \mathcal R 
 \end{align}
  and 
 \begin{align}
|\partial_x^a \partial_y ^b  \phi_{\mathcal R}(x,y)| \lesssim |\Delta x|^{-a} | \Delta y|^{-b}, \q \textrm {for} \q 0\leq a, b \leq 2.
 \end{align}
 Consequently,  
 \begin{align}
  \displaystyle  \sum_{\sigma}\sum_{\rho}\,\,\,\sum_{\mathcal R\in U_{n,g} (\sigma,\rho)} \phi_{\mathcal R} (x,y)\neq 0\,,\q \textrm {for all} \q(x,y)\in U_{n,g}.
 \end{align}
We already know that $|P(x,y)|=|P_n(x_n,y_n)| \sim   |x_n^{p_n}y_n^{q_n}|$, 
but we can also control upper bounds of certain derivatives of $P(x,y)$ for $(x,y)\in \tau \mathcal R$. 
Indeed, by the chain rule, $ \p_{y}^b P(x,y)$ is equal to 
 \begin{align}
\p_{y_n}^b P(x,y) \left( \frac{ \p y} {\p{y_n}}\right) ^b =  \p_{y_n}^b P(x,y) \cdot  x^{-b(m_0+\dots+m_{n-1})}.
 \end{align}
By (\ref{DEk}), we have   
for $(x,y)\in \tau \mathcal R$, 
   \begin{align}
| \p_{y}^b P(x,y) | &\lesssim |x_n^{p_n}y_n^{q_n}| \cdot \Delta y ^{-b}  \sim   |P_n(x_n,y_n)| \cdot \Delta y ^{-b}. 
 \end{align}
 Similarly, viewing $y$ as a function of $x$, one has 
 \begin{align*}
 \p_{x} P(x,y) = (\p_{x} P)(x,y)+  (\p_{y} P)(x,y) \frac{ \partial y }{\p x}. 
 \end{align*}
 Since $|\frac {\p y}{\p x}| \lesssim |x|^{m_0-1} $,  $| \p_{x} P(x,y)| $ is bounded by 
  \begin{align*}
& |x_n^{p_n} y_n^{q_n}| |x_n|^{-1} + |x_n^{p_n}y_n^{q_n}| |x^{m_0+\dots+m_{n-1}+m_n}y_n|^{-1}\cdot  |x|^{m_0-1} 
 \end{align*}
 and it is obvious that the later is dominant. Thus $ |\p_{x} P(x,y)|\lesssim |x_n^{p_n}y_n^{q_n}| \Delta x^{-1 }$. 
Analogous calculations for $a =0, 1,$ and $2$ yield that   
  \begin{align*}
| \p_{x}^a P(x,y)| \lesssim  |x_n^{p_n}y_n^{q_n}| \cdot  \Delta x^{-a}\sim |P_n(x_n,y_n)|\cdot  \Delta x^{-a}
 \end{align*}
 for $(x,y)\in \tau\mathcal R$.  
 Finally, we associate the function 
 \begin{equation}
\dfrac {\phi_\mathcal R}{ \sum_{(n,g,{\bsb\alpha},j)}\sum_{\sigma}\sum_\rho\,\,\,\sum_{\mathcal R\in U_{n,g,{\bsb\alpha},j} (\sigma,\rho)} \phi_{\mathcal R} }\cdot 
 \phi_E \label{partou} 
 \end{equation}
to ${\mathcal R}$ (and, for convenience, redefine $\phi_{\mathcal R}$ to equal \eqref{partou}) and therefore 
 \begin{align}
  \phi_E(x,y) = \sum_{(n,g,{\bsb\alpha},j)}\sum_{\sigma}\sum_\rho\,\,\,\sum_{\mathcal R\in U_{n,g,{\bsb\alpha},j} (\sigma,\rho)} \phi_{\mathcal R}(x,y). 
 \end{align}

%
%
%
%
%
%
%
%
%
%
%
%
%
%
%

\section{Proof of the main theorem}\label{PF03}
As  was done in the previous section, we decompose $\phi$ into the sum of four functions supported in the east, north, west and south regions respectively, i.e.  
$$
\phi(x,y) =\phi_E(x,y) +\phi_N(x,y)+\phi_W(x,y)+\phi_S(x,y)\,\,{\rm for}\,\, (x,y)\neq 0.
$$
The trilinear form can be then written as 
$$
\Lambda(f,g,h) = \Lambda_E(f,g,h) +\Lambda_N(f,g,h)+\Lambda_W(f,g,h)+\Lambda_S(f,g,h)   
$$
where 
\begin{align}\label{TR70}
 \Lambda_E(f,g,h) =\iint e^{i\lambda S(x,y)}f(x)g(y)h(x+y)\phi_E(x,y) dxdy,
\end{align}
and so on.  We will focus on $ \Lambda_E(f,g,h)$, for the others can be handled similarly. 
Apply the smooth partition of unity from the previous section built from the function 
$P(x,y) =DS(x,y)$; we can write 
\begin{align*}
\phi_E = \sum_{(n,g, {\bsb\alpha}, j)} \sum_{\sigma, \rho} \sum_{\mathcal R\in U_{n,g,{\bsb\alpha}, j}(\sigma,\rho)} \phi_{\mathcal R},
\end{align*}
where $\sigma$ and $\rho$ are dyadic numbers and conclude that
$$
 \Lambda_E(f,g,h) =\sum_{(n,g, {\bsb\alpha}, j)} \sum_{\sigma, \rho} \sum_{\mathcal R\in U_{n,g,{\bsb\alpha}, j}(\sigma,\rho)} 
 \Lambda_{\mathcal R}(f,g,h)
$$
where $ \Lambda_{\mathcal R}(f,g,h)$ is defined as in (\ref{TR70}) with $\phi_E$ replaced by $\phi_\mathcal R$. 
Since the number of $  U_{n,g,{\bsb\alpha}, j}$ is finite, it suffices 
to prove the desired estimate in one $U_{n,g,{\bsb\alpha}, j}$. We will focus on $U_{k,g}= U_{k,g,{\bsb\alpha}, j}$ for some 
${\bsb\alpha}, j$, where we used $k$ as the subindex to indicate the stage of iteration. 
Write 
$$
 \Lambda_{k,g}(f,g,h) = \sum_{\sigma, \rho} \sum_{\mathcal R\in U_{k,g}(\sigma,\rho)} 
 \Lambda_{\mathcal R}(f,g,h). 
$$
In what follows, we assume $\|f\|_\infty =\|g\|_\infty =\|h\|_\infty=1$ and use the shorthand
$$
\Lambda_* = \Lambda_*(f,g,h) 
$$
(where $*$ represents any subindex) and  
$$
\|\Lambda_*\| =\sup|\Lambda_* (f,g,h)|,
$$  
where the supremum ranges over all functions with $\|f\|_\infty =\|g\|_\infty =\|h\|_\infty=1$. We will also use the notation
$$
\Lambda_{k,g}[\sigma,\rho] = \sum_{\mathcal R\in U_{k,g}(\sigma,\rho)} 
 \Lambda_{\mathcal R}.
$$ 
Notice that for $(x,y) \in U_{k,g}$ one has 
\begin{align}
|DS(x,y)| \sim |x_k^{p_k}y_k^{q_k}|
\end{align}
where 
\begin{align}\label{yn}
\begin{cases}
x=x_0=\dots=x_{k},
\\
y =y_0 = \gamma_k(x)+y_kx^{m_0+\dots+m_{k-1}}.
\end{cases}
\end{align}
Here 
\begin{align}
\gamma_k(x) =\sum_{j=0}^* r_0x^{m_0+\dots+m_{j}}
\end{align}
with $*=k-1$ or $\infty$; see (\ref{gamma1x}) and (\ref{gamma2x}). 
Notice that $m_0\geq 1$ since $(x,y) \in \supp \phi _E$.  
For the edge in $\mathcal N(DS)$
corresponding to $m_0=1$, its left and right verticies are $(\alpha, n-3 -\alpha)$ and $(n-3-\beta, \beta)$, respectively 
(where it is possible that these vertices are equal). 
The basic estimate on the rectangles ${\mathcal R}$ is as follows.
 \begin{lemma}\label{LM6}
For each $\mathcal R\in U_{k,g}(\sigma,\rho)$
\begin{align}\label{SZ01}
\| \Lambda_\mathcal R \|  \lesssim \Delta x\cdot \Delta y 
\end{align}
and 
 \begin{align}\label{OS01}
\| \Lambda_\mathcal R \| \lesssim |\lambda|^{-\frac 1 4}  \left(\inf_{(x,y)\in \phi_\mathcal R}|P(x,y)|\right)^{-\frac 1 4}
\Delta x^{\f 1 p} \Delta y ^{\f 1 q}  (\Delta x+\Delta y)^{\f 1 r} 
 \end{align}
  for all $(p,q,r)\in\mathcal A$. 
 \end{lemma}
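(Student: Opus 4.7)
The bound \eqref{SZ01} is the trivial size estimate: since $\|f\|_\infty=\|g\|_\infty=\|h\|_\infty=1$ and $\phi_{\mathcal R}$ is bounded and supported in $\tau\mathcal R$, one immediately has $|\Lambda_{\mathcal R}| \leq \int_{\tau\mathcal R}|\phi_{\mathcal R}|\,dx\,dy \lesssim \Delta x \cdot \Delta y$.

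For the more substantive bound \eqref{OS01}, the plan is to recognize $\Lambda_{\mathcal R}$ as a direct instance of Corollary \ref{LC90} applied with the rescaling parameters $\delta_1 = \Delta x$, $\delta_2 = \Delta y$, and $\mu = \inf_{(x,y)\in\supp \phi_{\mathcal R}}|P(x,y)|$. First I will verify the convexity hypothesis: since $\supp\phi_{\mathcal R} \subset \tau\mathcal R$ and any axis-aligned rectangle is convex in each of the directions $\vec u=(1,0)$, $\vec v=(0,1)$, and $\vec w=(-1,1)$ (a line of slope $-1$ meets such a rectangle in a single segment), one has ${\rm Conv}_{\{\vec u,\vec v,\vec w\}}(\supp \phi_{\mathcal R}) \subset \tau\mathcal R$.

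Next I will check the derivative hypotheses. The bounds $|\partial_x^a\partial_y^b\phi_{\mathcal R}| \lesssim \Delta x^{-a}\Delta y^{-b}$ are built into the construction of $\phi_{\mathcal R}$ in Section \ref{PRE1}. For the phase, Claim \ref{CL01} gives $\tau\mathcal R \subset U^*_{k,g}$, on which the monomial estimate $|P(x,y)| \sim |x_k^{p_k}y_k^{q_k}|$ together with the derivative bounds $|\partial_x^a P| \lesssim |x_k^{p_k}y_k^{q_k}|\Delta x^{-a}$ and $|\partial_y^b P| \lesssim |x_k^{p_k}y_k^{q_k}|\Delta y^{-b}$ computed at the end of Section \ref{PRE1} all hold. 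Because $|x_k^{p_k}y_k^{q_k}|$ varies by only a bounded factor on the dyadic cell $\mathcal R$ (and hence on $\tau\mathcal R$), it is comparable to $\mu$ throughout, and so the required estimates $|P|\gtrsim \mu$, $|\partial_x^l P|\lesssim \mu/\Delta x^l$, $|\partial_y^l P|\lesssim \mu/\Delta y^l$ for $l=1,2$ all follow.

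To conclude, I will replace $f$, $g$, and $h$ by their restrictions to the projections of $\tau\mathcal R$ onto the $x$-axis, the $y$-axis, and the ``sum axis'' $\{x+y:(x,y)\in \tau\mathcal R\}$, which are intervals of lengths $\lesssim \Delta x$, $\lesssim \Delta y$, and $\lesssim \Delta x+\Delta y$ respectively. Using $\|f\chi_{I_x}\|_p \leq |I_x|^{1/p}\|f\|_\infty$ and the analogous bounds for $g$ and $h$, a direct application of Corollary \ref{LC90} yields
\[
|\Lambda_{\mathcal R}(f,g,h)| \lesssim |\lambda\mu|^{-1/4}\,\Delta x^{1/p}\Delta y^{1/q}(\Delta x+\Delta y)^{1/r}
\]
for every $(p,q,r)\in\mathcal A$, which is \eqref{OS01}. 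The only real checkpoint is the verification of the derivative hypotheses for $P=DS$ on all of $\tau\mathcal R$; this relies solely on the monomial approximation established in the resolution algorithm, so once Claim \ref{CL01} is invoked no further computation is needed, and the rest is bookkeeping.
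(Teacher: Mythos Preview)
Your proof is correct and follows essentially the same route as the paper's: both obtain \eqref{SZ01} by the trivial size bound and obtain \eqref{OS01} by verifying the hypotheses of Corollary~\ref{LC90} (derivative bounds on $\phi_{\mathcal R}$ and on $P=DS$ over $\tau\mathcal R$, plus the fact that $|P|$ is essentially constant there), applying it, and then converting the $L^p\times L^q\times L^r$ norms to the stated powers of $\Delta x$, $\Delta y$, $\Delta x+\Delta y$ via the support sizes of $f$, $g$, $h$. Your explicit check of the $\{\vec u,\vec v,\vec w\}$-convexity hypothesis via convexity of the rectangle $\tau\mathcal R$ is a small clarification the paper leaves implicit, but otherwise the arguments coincide.
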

\begin{proof}
Recall that the support of $\phi_\mathcal R $ is contained in a rectangular box of dimensions $\Delta x \times \Delta y$, with $\Delta x $ and $\Delta y$ given by
(\ref{sim07}), or (\ref{sim08}) or (\ref{sim18}). Then (\ref{SZ01}) follows by the triangle inequality, passing absolute values into the integral of $\Lambda_\mathcal R $. 
 In estimating
$\| \Lambda_\mathcal R \|  $, we can always assume $f$, $g$ and $h$ are supported on intervals of lengthes $\Delta x$, $\Delta y$ and 
$(\Delta x+\Delta y)$ respectively. Notice for $0\leq a, b \leq 2$, 
$$
|\p_x^{a}\p_y^{b} \phi_{\mathcal R}(x,y)| \lesssim \Delta x^{-a} \Delta y^{-b},
$$ 
$$
|\p_x ^{a} P(x,y) \lesssim |P(x,y)| \Delta x^{-a}
$$
and 
$$
|\p_y ^{b} P(x,y) \lesssim |P(x,y)| \Delta y^{-b}.
$$
For $(x,y)\in\supp \phi_\mathcal R$, $|P(x,y)|$ is essentially a constant in the sense that 
$$
\inf_{(x,y)\in \supp \phi_\mathcal R}|P(x,y)| \sim \sup_{(x,y)\in \supp \phi_\mathcal R} |P(x,y)| \sim |P(x,y)|\sim |x_k^{p_k}y_k^{q_k}|.
$$
Corollary \ref{LC90} then yields  
\begin{align}
\| \Lambda_\mathcal R \| \lesssim |\lambda|^{-\frac 1 4}  \left(\inf_{(x,y)\in \phi_\mathcal R}|P(x,y)|\right)^{-\frac 1 4}
\|f\|_p\|g\|_q\|h\|_r,
 \end{align}
and (\ref{OS01}) follows by noticing $\|f\|_\infty =\|g\|_\infty =\|h\|_\infty=1$ and using the size of the supports of $f$, $g$ and $h$. 
\end{proof}

The next step is to sum over $\| \Lambda_\mathcal R \|$ for $\mathcal R$ in $U_{k,g}(\sigma,\rho)$, i.e. to estimate 
$\Lambda_{k,g}[\sigma,\rho] $. 
To do so set 
\begin{align}\label{theta0}
\theta =\max\{\kappa , \frac  n 2-2\}
\end{align}
 and consider the following four cases: 
\begin{enumerate}
\item[]  \textbf{Case 1}: $k=0$, 
\item[]
 \textbf{Case 2}: $k\geq 1$ but $(m_0, r_0)\neq (1, -1)$, 
 \item[] \textbf{Case 3}: $k= 1 $ and $(m_0, r_0)= (1, -1)$,
  \item[]  \textbf{Case 4}: $k\geq 2 $ and $(m_0, r_0)= (1, -1)$. 
\end{enumerate}
 In the arguments that follow, we will use (\ref{OS01}) for triples $(p,q,r)$ equal to $(\f 8 3, 2, \f 8 3)$ in Cases 1 and 2 and $(\f 8 3,  \f 8 3, 2)$ in Cases 3 and 4. 
 
\subsection{Case 1 $k=0$}
In this case, the support of $\mathcal R$ is in good region $U_{0,g}$ and $|P(x,y)| \sim |x^{p_0}y^{q_0}|$ for $(x,y)\in U_{0,g}$, where $(p_0,q_0)$ is determined as follows:
\begin{enumerate}
\item If $P(x,y)$ is dominated by an edge, then $(p_0,q_0)$ is the left vertex of that edge.
In this case, let $-1/m_0$ be the slope of that edge. 
\item If $P(x,y)$ is dominated by a vertex, then $(p_0,q_0)$ is that vertex. In this case, let
 $-1/m_0$ and  $-1/m_{0,r}$ be the slopes of the edges on the left and right of  $(p_0,q_0)$. 
\end{enumerate}
For $(x,y)\in\mathcal R\in U_{0,g}(\sigma,\rho)$, one has $x\sim \sigma$ and $|y|\sim \rho\sigma^{m_0}$.  Notice 
$\sigma^{m_{0,r}-m_0}\lesssim \rho\lesssim 1$, where we set $m_{0,r}=m_0$ if $U_{0,g}$ is defined by an edge.  
Since $|x^{p_0}y^{q_0}| $ is a dominant term in $P(x,y)$ for $(x,y)\in U_{0,g}$, one has $|x^{p_0}y^{q_0}| \gtrsim |x^{n-3-\beta} y^\beta|$ and in particular 
\begin{align}\label{low35}
|P(x,y)| \sim |x^{p_0}y^{q_0}|\gtrsim |x^{n-3-\beta} y^\beta|\sim \sigma^{n-3-\beta} (\rho\sigma^{m_0})^{\beta}.
\end{align}
Notice $\Delta x \sim \sigma \gtrsim \Delta y \sim \rho \sigma^{m_0}$ and the cardinality of $\mathcal R\in U_{0,g}$ is 
about $\f \sigma {\Delta x}$, which is a uniformly bounded number. 
Setting $p =\frac  8 3$, $q = 2$, $r =\f 8 3$ in (\ref{OS01}) yields 
  \begin{align*}
\|\Lambda_{0,g}[\sigma,\rho]\|& \lesssim \f \sigma {\Delta x} \| \Lambda_\mathcal R \|
\\
&\lesssim 
|\lambda\sigma^{n-3-\beta} (\rho\sigma^{m_0}) ^{\beta}|^{-1/4}\Delta x ^{\frac 3 8}\Delta y^{\frac 1 2 } \Delta x ^{\frac 3 8}
 \\
 &= |\lambda\sigma^{(n-8)+(m_0-1)(\beta-2)} \rho ^{\beta-2}|^{-1/4}.
\end{align*}
On the other hand (\ref{SZ01}) yields 
\begin{align}
\|\Lambda_{0,g}[\sigma,\rho]\| \lesssim \f \sigma {\Delta x }\| \Lambda_\mathcal R \|& \lesssim \sigma \cdot \Delta y\sim \rho \sigma^{1+m_0}. 
\end{align}
For convenience in later arguments, let us 
define the quantities $S_1(\sigma,\rho)$, $S_2(\sigma,\rho)$ and $S(\sigma,\rho)$ by 
\begin{align}
S_1(\sigma,\rho) &  = |\lambda\sigma^{(n-8)+(m_0-1)(\beta-2)} \rho ^{\beta-2}|^{-1/4} , \nonumber \\
S_2 (\sigma,\rho) & = \rho \sigma^{1+m_0}, \nonumber \\
S (\sigma,\rho)\,\,\, & = (S_1(\sigma,\rho))^{\frac{4}{\theta+2}} (S_2(\sigma,\rho))^{\frac{\theta-2}{\theta + 2}} = 
 (|\lambda|^{- 1 }\sigma ^A\rho ^B)^{\frac{1}{\theta+2}}, \label{SS01}
\end{align}
where 
\begin{align}
\begin{cases}\label{AB01}
A=2\theta -(n-4)+(\theta-\beta)(m_0-1), 
\\
B= \theta -\beta.
\end{cases}
\end{align}
Notice that $\| \Lambda_{0,g}[\sigma,\rho] \| \lesssim S(\sigma,\rho)$ for any $\theta \geq 2$ and that $A ,B \geq 0$ 
given \eqref{theta0}.  
We postpone the summation over $\sigma$ and $\rho$ until Section \ref{sumsec}. 

\vspace{.2in}
\subsection{{Case 2}: $k\geq 1$ and $(m_0,r_0)\neq (1, -1)$.} 

Then in the good region $U_{k,g}$,  $|DS (x,y)| \sim |x_k^{p_k} y_k^{q_k}|$, where $y_k$ is given by
\begin{align*}
\begin{cases}
y =\gamma_k(x)+y_kx^{m_0+\dots+m_{k-1}},
\\
 x^{m_{k,r}} \lesssim |y_k|\lesssim x^{m_k}.
\end{cases}
\end{align*}
Here $\gamma_k(x)$ is given by (\ref{gamma1x}) or (\ref{gamma2x}), and 
$m_k =m_{k,r}$ if the good region $U_{k,g}$ is defined by an edge with slope $-\f 1 {m_k}$; if not, then $m_{k,r}>m_k$ and $U_{k,g}$ is defined by a vertex $V_k$ with $-\f 1 {m_k}$ and $-\f 1 {m_{k,r}} $ being the slopes
of the edges left and right of  $V_k$, respectively. 
For $(x,y)\in\mathcal R\in U_{k,g}(\sigma,\rho)$ one has 
$x\sim \sigma $ and $|y_k|\sim \rho \sigma^{m_k}$,
where  $\sigma^{m_{k,r} - m_k }\lesssim \rho \lesssim 1$. 
Since the cardinality of $\mathcal R\in U_{k,g}(\sigma,\rho)$ is bounded by a constant times $\sigma /\Delta x$, 
employing (\ref{OS01}) yields 
\begin{align}
\|\Lambda_{k,g}[\sigma,\rho]\|\lesssim \f \sigma {\Delta x} \|\Lambda_\mathcal R\| 
\lesssim  \f \sigma {\Delta x}   |\lambda \sigma^{p_k+m_kq_k}\rho^{q_k}|^{-1/4}   |\Delta x|^{\frac 3 8}   |\Delta y|^{\frac 1 2}   |\Delta x+\Delta y|^{\frac 3 8}.
\end{align}
By (\ref{ind1}) and (\ref{IT5}), one has 
\begin{align*}
|P (x,y)| \sim |x_k^{p_k} y_k^{q_k}|\sim |\sigma^{p_k+m_kq_k} \rho^{q_k}|\gtrsim |\sigma^{p_0+m_0q_0 +s_0\sum_{1\leq j \leq k}  m_j} \rho^{s_{k-1}}|.
\end{align*}
Notice also that
\begin{align}
\begin{cases}
p_0+m_0q_0 \leq (n-3-\beta) +m_0 \beta, 
\\
s_ 0\geq s_{j-1} \,\,\,\mbox{for}\,\, 1\leq j \leq k,
\\
\Delta x \sim   \rho \sigma^{m_0+\dots +m_{k}}  \cdot \sigma ^{1-m_0} \sim \Delta y \cdot \sigma ^{1-m_0}\geq \Delta y .
\end{cases}
\end{align}
Therefore
\begin{align}
\|\Lambda_{k,g}[\sigma,\rho]\|&
\lesssim   |\lambda \sigma^{\big((n-3-\beta) + \beta m_0\big)+s_0 (m_1+\dots +m_{k})}\rho^{s_0}|^{-\frac 1 4} \sigma \Delta y ^{\frac 1 4}\left (\frac{\Delta y}{ \Delta x}\right)^{\frac 1 4} \nonumber
\\
&\sim  |\lambda \sigma^{(n-8 )  + (\beta-2) (m_0-1)+(s_0 -1)(m_1+\dots +m_{k})}\rho^{s_0-1}|^{-\frac 1 4}. \label{news1}
\end{align}
As in the previous case, we define $S_1(\sigma,\rho)$ to be the quantity on the right-hand side of \eqref{news1}. The estimate (\ref{SZ01}) gives 
\begin{align*}
\|\Lambda_{k,g}[\sigma,\rho]\|&\lesssim \frac \sigma {\Delta x }   \Delta x\cdot \Delta y \sim \rho \sigma^{1+m_0+\dots +m_{k}}.
\end{align*}
In this case we define $S_2(\sigma,\rho)$ to equal $\rho \sigma^{1+m_0+\dots +m_{k}}$, and keeping the definition of $S(\sigma,\rho)$ as the same convex combination of $S_1(\sigma,\rho)$ and $S_2(\sigma,\rho)$ that was used in \eqref{SS01}, it follows that
\begin{align}\label{SS02}
S^{\theta+2}(\sigma,\rho) =S_1(\sigma,\rho)^4S_2(\sigma,\rho)^{ \theta -2}=|\lambda|^{-1}\sigma^A\rho^B,
\end{align}
where
\begin{align}\label{AB02}
\begin{cases}
A=(m_0-1)(\theta-\beta )+(\theta-1-s_0)(m_1+\dots +m_{k-1})+(2\theta-n+4),
\\
B= \theta -1-s_0.
\end{cases}
\end{align}
Notice that $s_0$ is the vanishing order of the edge of $\mathcal N(P)$ with slope $-1/m_0$. By our assumption $s_0\leq \max\{d_0, d_1\}\leq \frac n 2 -3\leq \theta -1$ and thus $A, B\geq 0$.  
\vspace{.2in}

\subsection{Case 3: $k= 1$ and $(m_0,r_0)=(1,-1)$.} In the good region $U_{1,g}$, $|P(x,y)|\sim |x_1^{p_1}y_1^{q_1}|$. 
Then $q_1\leq s_0 =\gamma \leq \kappa$ and $p_1 = p_0+m_0 q_0 = p_0+q_0 =n-3$. 
For $(x,y)\in U_{1,g}(\rho,\sigma)$, $x\sim \sigma $ and $|x+y|\sim \rho\sigma$. Then $U_{n,g}(\sigma,\rho)$ is the union of two parallelograms
with sides parallel to the $y$-axis and to the line $x+y=0$.  
One of them, denoted by $U_{n,g}(\sigma,\rho,+)$, lies above the line $x+y =0$, and the other, denoted by $U_{n,g}(\sigma,\rho,-)$, is below $x+y =0$. 
The treatment of these two cases is essentially the same, so we focus on $U_{n,g}(\sigma,\rho,+)$. 
Unlike the previous cases, we must consider $U_{n,g}(\sigma,\rho,+)$ as a whole rather than divide it into axis-parallel rectangular boxes. In other words, we consider the amplitude
$$
\phi_{n,g}[\sigma,\rho](x,y) =\sum_{\mathcal R\in U_{n,g}(\sigma,\rho,+)}\phi_\mathcal R(x,y).
$$
The support of $\phi_{n,g}[\sigma,\rho]$ is contained in the region $U_{n,g}^*(\sigma,\rho,+)$, an enlarged version of $U_{n,g}(\sigma,\rho,+)$, with $x$-cross section comparable to $\Delta x$ and $y$-cross section comparable to $\Delta y$, where $\Delta x =\Delta y = \rho \sigma$. Moreover, 
$$
|\p_y^{b}\phi_{n,g}[\sigma,\rho]|\lesssim \Delta y^{-b} \q{\rm for}\q b=0, 1, 2 .
 $$ 
 Notice that $U_{n,g}^*(\sigma,\rho,+)$ is convex in the $\vec u$, $\vec v$ and $\vec w$ directions and that $P(x,y)$ is still comparable to a monomial in $U_{n,g}^*(\sigma,\rho,+)$. 
 Thus we can apply the $(\frac 8 3, \f 8 3, 2)$ estimate from Theorem \ref{local}, which gives  
\begin{align*}
\|\Lambda_{k,g}[\sigma,\rho]\| &\lesssim |\lambda \sigma^{n-3}\rho^{\gamma}|^{-\frac  1 4} \sigma^{\frac 3 8}\sigma^{\frac 3 8} (\rho\sigma)^{\frac 1 2}
\\
&= |\lambda \sigma^{n-8}  \rho^{\gamma -2}|^{-\frac 1 4}. 
\end{align*}
We also have  
\begin{align}
\|\Lambda_{k,g}[\sigma,\rho]\|  \lesssim \sigma (\rho \sigma). 
\end{align}
As in the previous cases, we define $S_1(\sigma,\rho)$ to be $|\lambda \sigma^{n-8}  \rho^{\gamma -2}|^{-\frac 1 4}$, $S_2(\sigma,\rho)$ to be $\sigma (\rho \sigma)$, and $S(\sigma,\rho)$ to be
\begin{align}\label{SS03}
S^{\theta+2}(\sigma,\rho)=S_1(\sigma,\rho)^4S_2(\sigma,\rho)^{\theta- 2 }=|\lambda|^{-1}\sigma^A\rho^B,
\end{align}
where 
\begin{align}\label{AB03}
\begin{cases}
A=2\theta -n+4,
\\
B= \theta-\gamma,
\end{cases}
\end{align}
which are both non-negative.

\subsection{Case 4: $k\geq 2$ and $(m_0,r_0)=(1,-1)$. }
The case for $k\geq 2$ is similar to the previous one; this time we divide $U_{k,g}(\sigma,\rho)$ into boxes with sides parallel to the $y$-axis and the line $x+y=0$. 
In the good region $U_{k,g}$, $|P(x,y)|\sim |x_k^{p_k} y_k^{q_k}|$, where    
the change of variables is 
\begin{align}
y&=\gamma_k(x) +y_kx^{m_0+\dots +m_{k-1}}.
\end{align}
Write 
$$
\gamma_k(x) = r_0x^{m_0}+r_1x^{m_0+m_1}+\dots+r_{k-1}x^{m_0+\dots +m_{k-1}}+\xi(x),
$$
where $\xi(x)$ is the sum of the remaining terms (if any) of higher degree in $x$. 
Plug in $(m_0, r_0)=(1,-1)$ and  let 
\begin{align}
z=y+x=r_1x^{m_0+m_1}+\dots+r_{k-1}x^{m_0+\dots +m_{k-1}}+\xi(x)+y_k x^{m_0+\dots +m_{k-1}}.
\end{align}
Cover $U_{k,g}(\sigma,\rho)$ by $\Delta x\times \Delta z$ parallelograms whose sides are parallel to the $y$-axis and the line $z=0$, where 
\begin{align}
\begin{cases}
\Delta z\sim \rho \sigma^{m_0+\dots +m_{k}},
\\
\Delta x \sim \frac {\Delta z} {\frac {dz}{dx}} = \rho \sigma^{m_0+\dots +m_{k}} \sigma^{-m_1}.
\end{cases}
\end{align}
The number of such parallelograms is about $\frac \sigma {\Delta x}$.  
Consider the corresponding smooth partition and
employ the $(\frac 8 3, \f 8 3, 2)$ estimate in Theorem \ref{local}; we conclude that  
\begin{align*}
\|\Lambda_{k,g}[\sigma,\rho]\| &\lesssim |\lambda \sigma^{p_k+m_kq_k}\rho^{q_k}|^{-\frac  1 4}  \frac \sigma {\Delta x} \Delta x^{\frac 3 8}\Delta x ^{\frac 3 8} \Delta z ^{\frac 1 2}
\\
&\lesssim (\lambda\sigma^{(n-3)+s_0m_1+s_1(m_2+\dots+m_k)-4-m_1-(m_0+\dots+m_k)}\rho^{s_1-1})^{-\f 14}
\\
&= (\lambda\sigma^{(n-8)+(s_0-2)m_1+(s_1-1)(m_2+\dots+m_k)}\rho^{s_1-1})^{-\f 14},
\end{align*}
where, as in previous cases, we define $S_1(\sigma,\rho)$ to be the final quantity on the right-hand side.
In the above estimates, we used the fact that for $k\geq 2$, 
$
s_0\geq q_2\geq  q_k 
$
and
\begin{align*}
p_k+m_kq_k &\leq p_0+m_0q_0+m_1q_{1,l}+\dots +m_{k}q_{k,l}
\\
&\leq (n-3)+s_0 m_1 +s_1(m_2+\dots +m_{k}).
\end{align*}
Notice also 
\begin{align}
\|\Lambda_{k,g}[\sigma,\rho]\|   \lesssim \sigma \rho \sigma^{m_0+\dots +m_{k}};  
\end{align} 
in this case, we define $S_2(\sigma,\rho)$ to equal $\sigma \rho \sigma^{m_0+\dots +m_{k}}$.
Finally, as in all the previous cases, we define $S(\sigma,\rho)$ so that
\begin{align}\label{SS04}
S^{\theta+2}(\sigma,\rho)=S_1(\sigma,\rho)^4S_2(\sigma,\rho)^{\theta -2}=|\lambda|^{-1}\sigma^A\rho^B,
\end{align}
where 
\begin{align}\label{AB04}
\begin{cases}
A =(\theta-s_0)m_1+(\theta -1 -s_1)(m_2+\dots+m_{k-1}) +(2\theta -n+4),
\\
B= \theta -1-s_1.
\end{cases}
\end{align}
Both $A$ and $B$ are non-negative since $s_0 = \gamma\leq \theta$ and $s_1\leq d_1 \leq \theta -1$.

\subsection{Summing over $\sigma$ and $\rho$} 
We focus on $\theta >2$ first, which holds automatically when $n\geq 9$. \label{sumsec}
In this case, $S(\sigma,\rho)$ is a true convex combination of $S_1(\sigma,\rho)$ and $S_2(\sigma,\rho)$; see
(\ref{SS01}), (\ref{SS02}), (\ref{SS03}) and (\ref{SS04}).
It is not difficult to see that if $A>0$ or $B>0$ for all possible definitions (\ref{AB01}), (\ref{AB02}), (\ref{AB03}) and (\ref{AB04}), then the sum of 
$\|\Lambda_{k,g}[\sigma,\rho]\|$ 
over $\sigma$ and $\rho$ is bounded above by a multiple of $|\lambda|^{-\f 1 {\theta+2}}$.  Indeed if $A>0$, 
fix $\sigma$, then  
$$
 \sum_{\rho >0} \min\{S_1(\sigma,\rho), S_2(\sigma,\rho)  \}  \lesssim  S(\sigma,\rho) \lesssim  \sigma^{\frac{4A}{\theta+2}}\lambda^{-\frac 1 {\theta +2}},
$$
which is absolutely summable over $\sigma>0$. Thus 
\begin{align}\label{SUM09}
\sum_{\sigma,\rho} \|\Lambda_{k,g}[\sigma,\rho]\| \lesssim |\lambda|^{-\frac 1 {\theta +2}}. 
\end{align}
The case when $B>0$ is similar, but we need to use Fubini's Theorem to switch the order of $\sigma$ and $\rho$ in the sum first.
One can see that $\kappa >\f n 2- 2$ implies $A>0$ and $\kappa< \f n 2 -2$ implies $B>0$ for all cases. 
Therefore, for $\theta >2$ and $\kappa \neq \f n 2 -2$   
\begin{align}
\|\Lambda_{k,g}\| \lesssim |\lambda|^{-\f 1 {\max\{\kappa+2, n /2\}}}. 
\end{align}
In what follows, we assume $\kappa =\f n 2 -2 $ and $\theta >2$.  This implies $\theta =\kappa  =\f n 2 -2$. 
Notice that
$\beta =\kappa$ implies $A=B=0$ in  
(\ref{AB01}) and $\gamma =\kappa$ implies $A=B=0$  (\ref{AB03}). 
If $m_0=1$ and $s_0 =d_0 = \f n  2 -3$, then $A=B=0$ in (\ref{AB02}).  
In these three cases, we encounter an extra factor of $\log(2+|\lambda|)$ 
when summing over $\sigma$ and $\rho$.  To see this, first observe that when $\rho <|\lambda|^{-\frac 1 {\theta +2}}$, we can use $S_2(\sigma,\rho)$ to 
control $\|\Lambda_{k,g}[\sigma,\rho]\|$, which gives (\ref{SUM09}) when the sum is restricted of all $\sigma$ and all such $\rho$. The cardinality of 
dyadic numbers 
$ |\lambda|^{-\frac 1 {\theta +2}} \leq \rho\lesssim 1$ is bounded by a constant times $\log (2+|\lambda|)$ and we have 
$\sum_{\sigma} \|\Lambda_{k,g}[\sigma,\rho]\| \lesssim |\lambda|^{-\frac 1 {\theta +2}} $ for each fixed $\rho$. Thus we incur only an extra factor of $\log (2+|\lambda|)$ over the right-hand side of \eqref{SUM09} itself.
The same thing happens in the north or south regions $|y|\gtrsim |x|$, when $\alpha =\kappa =\f n 2 -2$.  
But if $\alpha, \beta, \gamma, d_0+1 <\f n 2 -2$ and $d_1+1 =\f n 2-2$, the $\log(2+|\lambda|)$ loss is not necessary.  
Indeed, $B>0$ in (\ref{AB01}) and (\ref{AB03}).  If $B=0$ in (\ref{AB02}), i.e. $s_0 = \theta -1 =d_1$, the definition of $d_1$ implies $m_0\neq 1$ and hence $m_0>1$. Since $\theta>\beta$ thus $A>0$. In (\ref{AB04}), $\theta -s_0 =\theta -\gamma >0$ and $m_1 \neq 0$, thus $A>0$. 

Thus, we have verified the all the estimates in Theorem \ref{main3} (1)
and the estimates in (2) when $\theta >2$ (i.e., $n\geq 9$ or $5\leq n\leq 8$ and $\kappa >2$).

To verify the remaining cases of (2) in Theorem \ref{main3},
assume $\theta \leq 2$, i.e., $n \leq 8$ and $\kappa \leq 2$. 
Notice that the exponents of $\sigma$ and $\rho$ in $S_1(\sigma,\rho)$ are always non-negative. Therefore the following holds 
\begin{align}\label{SUM10}
\sum_{\sigma, \rho }\|\Lambda_{k,g}[\sigma,\rho]\| \lesssim  |\lambda|^{-\frac 1 {4}} |\log (2+|\lambda|)|^2. 
\end{align}
In some cases, the exponents of $\log (2+|\lambda|)$ can be decreased to 1 or even 0. For example, if $n \leq 7$, then the
exponent of $\sigma$ in $S_1(\sigma,\rho)$ is positive and one can remove one $\log(2+|\lambda|)$ in (\ref{SUM10}). 
 If in addition $\kappa=1$, then the exponent of $\rho$ in
$S_1(\sigma,\rho)$ is also positive and (\ref{SUM10}) holds without the $|\log (2+|\lambda|)|^2$ term. But
these estimates are less interesting for they are not sharp in general.

\subsection{Sharpness}
First of all, the $|\lambda|^{-\f 2 n}$ decay in Theorem \ref{main3} is the highest possible that one can achieve for real analytic phases. In particular,
the results in the first part (with $\mu =0$) of
Theorem \ref{main3} (1) are sharp. 
One way to see this is via extrapolation. 
Indeed, $P_{n-3}\neq 0$ implies the multiplicity of the quotient of $S$ by the class of functions annihilated by $D$ is $n$.
Extrapolating the following sharp estimate in \cite{X2013}  
$$
|\Lambda (f,g,h)| \lesssim 2^{-\f1 {2n}}\|f\|_2\|g\|_2\|h\|_2.
$$
with the trivial $L^{3/2}\times L^{3/2}\times L^{3/2}$ non-decay estimate, we see that 
the highest possible decay for $L^{\infty}\times L^{\infty}\times L^{\infty}$ is $|\lambda|^{-\f 2 n}$. 
What is amazing here is that this highest possible decay can only be achieved when $f$, $g$ and $h$ 
all belong to $L^\infty$.

Another way to see that the $|\lambda|^{-\f 2 n}$ decay is the highest possible is via sublevel set estimates. 
Recall that for $\delta \in (0,1)$, a $|\lambda| ^{-\delta}$ decay estimate of \ $L^{\infty}\times L^{\infty}\times L^{\infty}$ 
implies the following uniform sublevel set estimate  
$$
|\{(x,y)\in \supp \phi:|S(x,y)-P(x)-Q(y)-R(x+y)| <\epsilon \}|\leq C \epsilon^{-\delta}, 
$$
for some positive constant $C$ uniform of all measurable functions $P, Q$ and $R$.  
We can always write $S$ as the sum of homogeneous polynomials 
$$
S(x,y) =\sum_{k =n}^\infty S_k(x,y),
$$  
since the term $(S_0(x,y)+\dots+S_{n-1}(x,y))$ is assumed to be annihilated by $D$ and thus is expressible as 
a sum of functions of $x$,  $y$ and $(x+y)$.  
There is a positive number $K$ (depending on $S$) such that for all $\epsilon >0$ sufficiently small,  
$$
|S(x,y)|<\epsilon \q {\rm given }\q |x|, |y| <\epsilon ^{\f 1 n }/K.
$$
Consequently
$$
|\{(x,y)\in \supp \phi:  |S(x,y)|<\epsilon \}| \geq \epsilon ^{\f 2 n }/K^2.  
$$
Therefore any decay of the $L^{\infty}\times L^{\infty}\times L^{\infty}$ estimate can not be better than $|\lambda|^{-\f 2 n}$.

When $\kappa = \f n 2 -2$ ($n$ is even) and $n\geq 9$, the extra $\log (2+|\lambda|)$ term is indeed necessary if at least one of $\alpha$, $\beta$, $\gamma$, $d_0+1$ is equal to $\f n 2 -2$. 
In particular, the estimates in the second part (with $\mu=1$) of Theorem \ref{main3} (1) are also sharp. To prove this, we use the following facts: if $(\f n 2, \f n2)$ is a vertex (and thus the main face) of $\mathcal N(S)$, then 
\begin{align}\label{SUB}
|\{(x,y)\in \supp \phi:|S(x,y)| <\epsilon \}|\sim \epsilon^{\f 2 n} \log (2+|\lambda|). 
\end{align}
This estimate is also true if there is a local coordinate change with the Hessian bounded below by a non-zero constant 
such that the main face 
of $\mathcal N(S)$ under this new coordinate is $(\f n 2, \f n2)$. 
Now if $\alpha=\kappa = \f n 2 -2$, one can find a real analytic function $S$ 
such that $x$ is a factor of $S_n(x,y)$ of order $\f n 2$. 
Then $(\f n 2, \f n 2)$ is a vertex of $\mathcal N(S)$. 
In particular, (\ref{SUB}) holds with this function. 
Then the $L^{\infty}\times L^{\infty}\times L^{\infty}$ estimate with decay $|\lambda| ^{-\f 2 n}\log (2+|\lambda| )$ has to be sharp 
for it implies the corresponding uniform sublevel set estimate. 
The case for $\beta = \kappa = \f n 2 -2$ is similar. 
For $\gamma =\kappa = \f n 2 -2$, one can also find an $S$ such that $(x+y)$ is a factor of $S_{n}(x,y)$ of order $\f n 2$.  
Under the new coordinate $(x+y, y)$, $(\f n 2, \f n 2)$ is a vertex of $S$. 
When $d_0 +1 =\kappa = \f n 2 -2$, 
there is a linear factor $(x+by)$, other than $x$, $y$ and $(x+y)$, whose order in $DS_n(x,y)$ is $\f n 2 -3$.
Consequently, there is a $S$ such that the order of $(x+by)$ in $S_n(x,y)$ is exactly $\f n 2$. Under the new coordinate $(x, x+by)$, $S$ also has $(\f n 2, \f n 2 )$ as its vertex.

\bibliographystyle{alpha}
\begin{bibdiv}
\begin{biblist}

\bib{CCW99}{article}{
      author={Carbery, Anthony},
      author={Christ, Michael},
      author={Wright, James},
       title={Multidimensional van der {C}orput and sublevel set estimates},
        date={1999},
        ISSN={0894-0347},
     journal={J. Amer. Math. Soc.},
      volume={12},
      number={4},
       pages={981\ndash 1015},
         url={http://dx.doi.org/10.1090/S0894-0347-99-00309-4},
      review={\MR{1683156 (2000h:42010)}},
}

\bib{CW02}{inproceedings}{
      author={Carbery, Anthony},
      author={Wright, James},
       title={What is van der {C}orput's lemma in higher dimensions?},
        date={2002},
   booktitle={Proceedings of the 6th {I}nternational {C}onference on {H}armonic
  {A}nalysis and {P}artial {D}ifferential {E}quations ({E}l {E}scorial, 2000)},
       pages={13\ndash 26},
         url={http://dx.doi.org/10.5565/PUBLMAT_Esco02_01},
      review={\MR{1964813 (2004a:42016)}},
}

\bib{CHR11-1}{article}{
      author={Christ, Michael},
       title={Bounds for multilinear sublevel sets via szemeredi's theorem},
        date={2011},
     journal={arXiv preprint arXiv:1107.2350},
}

\bib{CHR11-2}{article}{
      author={Christ, Michael},
       title={Multilinear oscillatory integrals via reduction of dimension},
        date={2011},
     journal={arXiv preprint arXiv:1107.2352},
}

\bib{CLTT05}{article}{
      author={Christ, Michael},
      author={Li, Xiaochun},
      author={Tao, Terence},
      author={Thiele, Christoph},
       title={On multilinear oscillatory integrals, nonsingular and singular},
        date={2005},
        ISSN={0012-7094},
     journal={Duke Math. J.},
      volume={130},
      number={2},
       pages={321\ndash 351},
}

\bib{CS11}{article}{
      author={Christ, Michael},
      author={Oliveira~e Silva, Diogo},
       title={On trilinear oscillatory integrals},
        date={2014},
        ISSN={0213-2230},
     journal={Rev. Mat. Iberoam.},
      volume={30},
      number={2},
       pages={667\ndash 684},
         url={http://dx.doi.org/10.4171/RMI/795},
}

\bib{GR04}{article}{
      author={Greenblatt, Michael},
       title={A direct resolution of singularities for functions of two
  variables with applications to analysis},
        date={2004},
        ISSN={0021-7670},
     journal={J. Anal. Math.},
      volume={92},
       pages={233\ndash 257},
         url={http://dx.doi.org/10.1007/BF02787763},
      review={\MR{2072748 (2005f:42021)}},
}

\bib{GR08}{article}{
      author={Greenblatt, Michael},
       title={Simply nondegenerate multilinear oscillatory integral operators
  with smooth phase},
        date={2008},
        ISSN={1073-2780},
     journal={Math. Res. Lett.},
      volume={15},
      number={4},
       pages={653\ndash 660},
      review={\MR{2424903 (2009f:42008)}},
}

\bib{GR11}{article}{
      author={Gressman, Philip~T},
       title={Uniform geometric estimates of sublevel sets},
        date={2011},
     journal={Journal d'Analyse Math{\'e}matique},
      volume={115},
      number={1},
       pages={251\ndash 272},
}

\bib{HOR73}{article}{
      author={H{\"o}rmander, Lars},
       title={Oscillatory integrals and multipliers on {$FL^{p}$}},
        date={1973},
        ISSN={0004-2080},
     journal={Ark. Mat.},
      volume={11},
       pages={1\ndash 11},
}

\bib{LI08}{article}{
      author={Li, Xiaochun},
       title={Bilinear {H}ilbert transforms along curves, {I}: the monomial
  case},
        date={2013},
        ISSN={2157-5045},
     journal={Anal. PDE},
      volume={6},
      number={1},
       pages={197\ndash 220},
}

\bib{PS97}{article}{
      author={Phong, D.~H.},
      author={Stein, E.~M.},
       title={The {N}ewton polyhedron and oscillatory integral operators},
        date={1997},
        ISSN={0001-5962},
     journal={Acta Math.},
      volume={179},
      number={1},
       pages={105\ndash 152},
         url={http://dx.doi.org/10.1007/BF02392721},
      review={\MR{1484770 (98j:42009)}},
}

\bib{PSS01}{article}{
      author={Phong, D.~H.},
      author={Stein, E.~M.},
      author={Sturm, Jacob},
       title={Multilinear level set operators, oscillatory integral operators,
  and {N}ewton polyhedra},
        date={2001},
        ISSN={0025-5831},
     journal={Math. Ann.},
      volume={319},
      number={3},
       pages={573\ndash 596},
         url={http://dx.doi.org/10.1007/PL00004450},
      review={\MR{1819885 (2002f:42019)}},
}

\bib{RY01}{article}{
      author={Rychkov, Vyacheslav~S.},
       title={Sharp {$L^2$} bounds for oscillatory integral operators with
  {$C^\infty$} phases},
        date={2001},
        ISSN={0025-5874},
     journal={Math. Z.},
      volume={236},
      number={3},
       pages={461\ndash 489},
         url={http://dx.doi.org/10.1007/PL00004838},
}

\bib{X2013}{article}{
      author={Xiao, Lechao},
       title={Sharp estimates for trilinear oscillatory integrals and an
  algorithm of two-dimensional resolution of singularities},
        date={2013},
     journal={arXiv preprint arXiv:1311.3725},
}

\end{biblist}
\end{bibdiv}

\end{document}